\begin{document}

\def\dbl{[\hskip -1pt[}
\def\dbr{]\hskip -1pt]}

\title
{New examples of  CR submanifolds for the  $2$-jet determination problem}
\author{
Florian Bertrand, Martin Kol\'a\v{r} and Francine Meylan
}

\def\Label#1{\label{#1}}
\def\1#1{\ov{#1}}
\def\2#1{\widetilde{#1}}
\def\6#1{\mathcal{#1}}
\def\4#1{\mathbb{#1}}
\def\3#1{\widehat{#1}}
\def\K{{\4K}}
\def\LL{{\4L}}
\def\H{{\4H}}
\def\C{{\4C}}
\def\R{{\4R}}
\def \MM{{\4M}}

\def\Re{{\sf Re}\,}
\def\Im{{\sf Im}\,}

\numberwithin{equation}{section}
\def\s{s}
\def\k{\kappa}
\def\ov{\overline}
\def\span{\text{\rm span}}
\def\ad{\text{\rm ad }}
\def\tr{\text{\rm tr}}
\def\xo {{x_0}}
\def\Rk{\text{\rm Rk\,}}
\def\sg{\sigma}
\def \emxy{E_{(M,M')}(X,Y)}
\def \semxy{\scrE_{(M,M')}(X,Y)}
\def \jkxy {J^k(X,Y)}
\def \gkxy {G^k(X,Y)}
\def \exy {E(X,Y)}
\def \sexy{\scrE(X,Y)}
\def \hn {holomorphically nondegenerate}
\def\hyp{hypersurface}
\def\prt#1{{\partial \over\partial #1}}
\def\det{{\text{\rm det}}}
\def\wob{{w\over B(z)}}
\def\co{\chi_1}
\def\po{p_0}
\def\fb {\bar f}
\def\gb {\bar g}
\def\Fb {\ov F}
\def\Gb {\ov G}
\def\Hb {\ov H}
\def\zb {\bar z}
\def\wb {\bar w}
\def \qb {\bar Q}
\def \t {\tau}
\def\z{\chi}
\def\w{\tau}
\def\Z{\zeta}
\def\phi{\varphi}
\def\eps{\varepsilon}
\def\5#1{\mathfrak{#1}}

\newcommand{\transp}{\,^t}

\def \T {\theta}
\def \Th {\Theta}
\def \L {\Lambda}
\def\b {\beta}
\def\a {\alpha}
\def\o {\omega}
\def\l {\lambda}

\def \im{\text{\rm Im }}
\def \re{\text{\rm Re }}
\def \Char{\text{\rm Char }}
\def \supp{\text{\rm supp }}
\def \codim{\text{\rm codim }}
\def \Ht{\text{\rm ht }}
\def \Dt{\text{\rm dt }}
\def \hO{\widehat{\mathcal O}}
\def \cl{\text{\rm cl }}
\def \bR{\mathbb R}
\def \bS{\mathbb S}
\def \bK{\mathbb K}
\def \bD{\mathbb D}
\def \bC{\mathbb C}
\def \C{\mathbb C}
\def \N{\mathbb N}
\def \bL{\mathbb L}
\def \bZ{\mathbb Z}
\def \bN{\mathbb N}
\def \scrF{\mathcal F}
\def \scrK{\mathcal K}
\def \mc #1 {\mathcal {#1}}
\def \scrM{\mathcal M}
\def \cR{\mathcal R}
\def \scrJ{\mathcal J}
\def \scrA{\mathcal A}
\def \scrO{\mathcal O}
\def \scrV{\mathcal V}
\def \scrL{\mathcal L}
\def \scrE{\mathcal E}
\def \hol{\text{\rm hol}}
\def \aut{\text{\rm aut}}
\def \Aut{\text{\rm Aut}}
\def \J{\text{\rm Jac}}
\def\jet#1#2{J^{#1}_{#2}}
\def\gp#1{G^{#1}}
\def\gpo{\gp {2k_0}_0}
\def\emmp {\scrF(M,p;M',p')}
\def\rk{\text{\rm rk\,}}
\def\Orb{\text{\rm Orb\,}}
\def\Exp{\text{\rm Exp\,}}
\def\Span{\text{\rm span\,}}
\def\d{\partial}
\def\D{\3J}
\def\pr{{\rm pr}}

\def \CZZ {\C \dbl Z,\zeta \dbr}
\def \D{\text{\rm Der}\,}
\def \Rk{\text{\rm Rk}\,}
\def \CR{\text{\rm CR}}
\def \ima{\text{\rm im}\,}
\def \I {\mathcal I}

\def \M {\mathcal M}

\newtheorem{Thm}{Theorem}[section]
\newtheorem{Cor}[Thm]{Corollary}
\newtheorem{Pro}[Thm]{Proposition}
\newtheorem{Lem}[Thm]{Lemma}
\newtheorem{Conj}[Thm]{Conjecture}

\theoremstyle{definition}\newtheorem{Def}[Thm]{Definition}

\theoremstyle{remark}
\newtheorem{Rem}[Thm]{Remark}
\newtheorem{Exa}[Thm]{Example}
\newtheorem{Exs}[Thm]{Examples}

\def\bl{\begin{Lem}}
\def\el{\end{Lem}}
\def\bp{\begin{Pro}}
\def\ep{\end{Pro}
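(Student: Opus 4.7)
The plan is to reduce the $2$-jet determination assertion to a statement about the Lie algebra $\mathfrak{hol}(M,p)$ of germs of infinitesimal CR automorphisms at the reference point, and then to show that any element with vanishing $2$-jet at $p$ must be identically zero. First I would put $M$ in a weighted normal form adapted to its Levi form and higher-order CR invariants; this produces a natural grading on the local ring in which each real-analytic infinitesimal CR automorphism $X$ admits a convergent expansion $X = \sum_{\mu \geq -1} X_\mu$ into weighted-homogeneous components. The components of weight $\mu \in \{-1,0,1,2\}$ are precisely those seen in $j^2_p X$, so the assumption $j^2_p X = 0$ is equivalent to $X_\mu \equiv 0$ for every $\mu \leq 2$.

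The second step is the inductive one: for the new examples I would show that $X_\mu \equiv 0$ for every $\mu \geq 3$. Plugging $X_\mu$ into the tangency equation $\Re(X\rho)|_M = 0$ for the vector-valued defining function $\rho$ and isolating the weighted-homogeneous piece of matching weight yields a linear system on the coefficients of $X_\mu$. I would split the system into its holomorphic and antiholomorphic parts and use the structural identities coming from the Levi form together with the higher-order defining terms of the example to force the kernel to be trivial. Holomorphic nondegeneracy (once verified for the models at hand) takes care of translation-type components, while the specific shape of the defining data must be used to rule out weight-preserving terms.

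The main obstacle, as is standard in this line of work, is the passage through the critical weights at which genuinely new CR invariants appear and the Levi form alone no longer suffices. For those weights I would exploit the precise algebraic structure of the proposed examples, verifying by hand that a hypothetical nonzero $X_\mu$ would impose a relation among the defining coefficients that the examples have been designed to violate. Once the infinitesimal problem is closed, the statement about germs of CR diffeomorphisms follows by the standard argument: a CR map with vanishing $2$-jet at $p$ produces, via the associated one-parameter family, an infinitesimal CR automorphism with vanishing $2$-jet, which by the preceding analysis must be trivial; hence the original map equals the identity to all orders and, by the real-analyticity (or formal) framework in force, coincides with the identity as a germ.
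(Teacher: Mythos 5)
There is a problem before any mathematics can be compared: the ``statement'' you were given is not a proposition of the paper at all, but a fragment of the preamble macro definitions (\verb|\def\bp{\begin{Pro}}| and \verb|\def\ep{\end{Pro}}|). The paper never actually invokes a Proposition environment, so there is no proposition, and no proof in the paper, against which your argument can be matched. I will therefore assess your proposal against the result it most plausibly targets, namely the $2$-jet determination statement for the new codimension-$7$ example in $\C^{10}$ (Theorem \ref{theoex} and its Corollary).

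Measured against that, your route has a genuine gap and is not the paper's route. You propose to expand an infinitesimal CR automorphism into weighted homogeneous components $X=\sum_{\mu\ge -1}X_\mu$ and kill each component by plugging into the tangency equation. That machinery (the grading \eqref{grin} from \cite{KMZ}) is what the paper uses in Part~I for \emph{real-analytic models of hypersurfaces}, where convergence of the expansion and the structure of $\hol(M_0,0)$ are available. The Part~II example is a quadric of codimension $7$ in $\C^{10}$ whose perturbations are only of class $\mathcal{C}^4$, and the automorphisms considered are only $\mathcal{C}^3$: there is no convergent weighted expansion of a $\mathcal{C}^3$ CR automorphism, and your final reduction (a map with trivial $2$-jet generates, via a one-parameter family, an infinitesimal automorphism with trivial $2$-jet) is precisely the step that is not justified in this finite-smoothness, higher-codimension setting. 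The paper circumvents all of this with stationary discs: it computes the explicit solution $X$ of the matrix Riccati equation \eqref{eqX} for $a=(\varepsilon,\varepsilon,0,\dots,0)$, verifies $\mathfrak{D}(a)$-nondegeneracy by showing the block matrices $B_1$, $B_2$ of \eqref{eqB_1}--\eqref{eqB_2} are invertible, and then shows the $1$-jet map $\mathfrak j_1$ on lifts of stationary discs is a local diffeomorphism (invertibility of $C_1$, $C_2$), from which $2$-jet determination of $\mathcal{C}^3$ CR automorphisms follows by the established disc machinery of \cite{be-me}. If you want to pursue your Lie-algebraic route, you would first have to prove the map-to-vector-field reduction and the termination of the grading for this specific quadric, neither of which is supplied by your sketch.
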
}
\def\bt{\begin{Thm}}
\def\et{\end{Thm}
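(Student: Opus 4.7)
The approach is to reduce the 2-jet determination problem to showing that a certain iterative system of polynomial equations has a unique solution once its degree-$2$ data are prescribed. I would start by fixing weighted normal coordinates $(z,w)$ adapted to the CR submanifold $M$ at the reference point, chosen so that the defining equations isolate the Levi form and its higher-order analogues. Any local CR automorphism $F=(f,g)$ fixing the origin then expands as a formal power series whose components can be organized by the weighted degree inherited from the grading of the coordinates. The compatibility condition $F^{*}(\rho)\equiv 0 \pmod{\rho}$, where $\rho$ denotes the system of defining functions of $M$, translates into a sequence of identities among these components.

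My next step would be to separate that identity into its weighted-homogeneous pieces. At weighted degree $d$, one obtains an equation of the form $\mathcal{L}_{d}(F_{d})=\Phi_{d}(F_{<d})$, where $\mathcal{L}_{d}$ is a linear operator depending only on the model at the origin and $\Phi_{d}$ is a universal polynomial in the coefficients of $F$ of weighted degree less than $d$. The strategy is to show by induction on $d\geq 3$ that $\mathcal{L}_{d}$ is injective on the relevant subspace of admissible $F_{d}$, which then forces $F_{d}$ to be determined by $F_{<d}$, and ultimately by $F_{1}$ and $F_{2}$. I would match coefficients in $(z,\bar z,w,\bar w)$, use the reality of $\rho$ to pair holomorphic and antiholomorphic parts, and keep track of the contribution of the model hypersurface (or higher-codimension analogue) that encodes the new examples.

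The main obstacle, as is typical for Levi-degenerate or higher-codimensional situations, is the existence of directions in which $\mathcal{L}_{d}$ has nontrivial kernel coming from the degeneracy of the Levi form. The genuine content of the theorem should be that the specific algebraic structure of the examples constructed earlier in the paper excludes such kernel elements: concretely, I expect to exploit a holomorphic-nondegeneracy or finite-type condition that, applied after enough iteration, ensures every formal infinitesimal CR automorphism vanishing to order $3$ must vanish identically. Pinning down the precise algebraic identity that closes the induction is, in my view, where the novelty of the new examples must enter and where the main calculation lies.

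Finally, once uniqueness of the formal Taylor expansion of $F$ beyond its 2-jet is established, it transfers to actual CR automorphisms: in the real-analytic category the formal identity forces the convergent identity, while in the smooth category one invokes the standard reflection/approximation techniques applicable to the class of submanifolds at hand. This yields the 2-jet determination statement for the new examples.
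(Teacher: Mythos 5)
Your proposal does not contain a proof; it is an outline of the classical Chern--Moser strategy (weighted normal coordinates, homogeneous decomposition of the mapping identity, induction on the degree via an operator $\mathcal{L}_d$), and it explicitly defers the one step that carries all the content: showing that $\mathcal{L}_d$ has no kernel in the admissible directions for the specific example at hand. You write that pinning down the algebraic identity that closes the induction is ``where the main calculation lies'' --- but that calculation \emph{is} the theorem. In higher codimension this injectivity genuinely fails in general: the paper cites counterexamples (Meylan, Gregorovi\v{c}--Meylan) of Levi nondegenerate quadrics whose automorphisms are not determined by their $2$-jets, so no soft finite-type or holomorphic-nondegeneracy argument can substitute for an actual verification tied to the matrices $A_1,\dots,A_7$. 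Moreover, the example has codimension $d=7>2n=6$, which already rules out the $\mathfrak{D}$-nondegeneracy mechanism that usually powers such inductions, so your plan gives no indication of what structural feature of this particular quadric would rescue the argument.

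The paper takes an entirely different route, via stationary discs rather than formal normal forms. The actual proof of the theorem is a concrete matrix computation: one writes down the seven Hermitian matrices $A_j$, solves the quadratic matrix equation $PX^2+AX+\transp{\overline P}=0$ for the small solution $X$ (here diagonal, with entries $\alpha,\gamma$ solving scalar quadratics in $\varepsilon$), computes the series $K_j=\sum_r \transp\overline{X}^r A_j X^r$ and the derivatives $X_{\Re a_s}$ via Lemma \ref{lemdiffX}, and verifies that two explicit block matrices ($B_1,B_2$ for $\mathfrak{D}(a)$-nondegeneracy, $C_1,C_2$ for invertibility of the differential of the $1$-jet map on lifts of stationary discs) are nonsingular for small $\varepsilon\neq 0$. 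The $2$-jet determination of CR automorphisms is then not re-proved but deduced from the $1$-jet determination of stationary discs by citing the established machinery (Theorem 3.7 of \cite{be-me}). If you want to salvage your approach, you would have to carry out the kernel analysis of $\mathcal{L}_d$ for this specific quadric explicitly, which is a substantial and independent piece of work, not a routine induction.
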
}
\def\bc{\begin{Cor}}
\def\ec{\end{Cor}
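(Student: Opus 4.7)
The strategy is to reformulate the $2$-jet determination assertion as a rigidity statement at the Lie algebra level. Let $\hol(M,p)$ denote the real Lie algebra of germs at $p$ of holomorphic vector fields whose real parts are tangent to $M$. By the standard argument, $2$-jet determination of local CR automorphisms of $(M,p)$ is equivalent to showing that any $X \in \hol(M,p)$ whose $2$-jet at $p$ vanishes is identically zero in a neighborhood of $p$, and this is the infinitesimal assertion I would target.

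First, I would place $M$ in weighted normal coordinates $(z,w)$ at $p = 0$ adapted to its weighted homogeneous model $M_H$, so that the defining equations of $M$ coincide with those of $M_H$ modulo terms of strictly higher weight. Any $X \in \hol(M,p)$ then admits a weighted expansion $X = \sum_{\mu \geq \mu_0} X_\mu$ whose leading term $X_{\mu_0}$ necessarily lies in the graded Lie algebra $\hol(M_H,0) = \bigoplus_{\mu \geq -1} \5g_\mu$. The hypothesis that the $2$-jet of $X$ at $0$ vanishes translates into the lower bound $\mu_0 \geq 3$, so that $X_{\mu_0}$ belongs to $\bigoplus_{\mu \geq 3} \5g_\mu$.

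The crux is then to show $\5g_\mu = 0$ for every $\mu \geq 3$, i.e.\ that the new models introduced in the paper carry no nontrivial shift vector field of weight at least three. This reduces to the tangency equation
\[
\Re\bigl(X_\mu \rho\bigr)\big|_{M_H} = 0,
\]
where $\rho$ is the weighted homogeneous defining function of $M_H$; one expands in weighted components and shows that the resulting polynomial system forces $X_\mu \equiv 0$. The expected main obstacle is to leverage the specific algebraic structure of the new examples---typically a precise interplay between the Hermitian and non-Hermitian parts of the Levi form at $p$, or some carefully engineered higher-order invariant---to extract enough independent equations to eliminate every candidate component.

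Once the model rigidity is established, an induction on $\mu_0$ removes each weighted component of $X$ in turn: the same argument applied to $X - X_{\mu_0}$ kills its new leading term, and so on. The finite-dimensionality of $\hol(M,0)$, guaranteed by holomorphic nondegeneracy and finite type of the examples, ensures that this formal induction terminates and yields $X \equiv 0$, completing the proof.
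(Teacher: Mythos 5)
Your proposal does not follow the paper's route, and it has genuine gaps. The paper proves this corollary in one line: the whole content is Theorem \ref{theoex}.ii, namely that the $1$-jet map $\mathfrak j_1$ on lifts of stationary discs is a diffeomorphism at $(a,V)$, and the corollary then follows by citing the general implication ``$1$-jet determination of stationary discs $\Rightarrow$ $2$-jet determination of CR automorphisms'' (Theorem 3.7 in \cite{be-me}, which also covers the $\mathcal{C}^4$ perturbations). Your argument never invokes Theorem \ref{theoex} at all, so it cannot be a proof of this statement \emph{as a corollary}; it attempts to reprove $2$-jet determination from scratch by a completely different (Lie-algebraic) method.

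That method does not work here. First, the corollary concerns CR automorphisms of class $\mathcal{C}^3$ of a $\mathcal{C}^4$ submanifold; these are not real-analytic, so there is no weighted power-series expansion $X=\sum_{\mu\ge\mu_0}X_\mu$ of a generating vector field, and the reduction of $2$-jet determination to the vanishing of high-weight graded components of $\hol(M,p)$ is not available (that reduction, as used in Part I of the paper, requires analyticity or at least a formal framework, plus a separate argument to pass from the infinitesimal statement to the statement about maps). Second, even at the level of the model, your target ``$\5g_\mu=0$ for $\mu\ge 3$'' is not the relevant condition for a quadric: for a holomorphically nondegenerate quadric the grading already stops at weight $1$, yet $2$-jet determination can still \emph{fail} for Levi-nondegenerate quadrics in higher codimension (this is exactly the content of the counterexamples in \cite{Me}, \cite{GM} recalled in the introduction). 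Hence no soft structural argument of the kind you sketch can succeed; one must exploit the specific $\mathfrak{D}(a)$-nondegeneracy of this example through the stationary disc machinery, which is what Theorem \ref{theoex} does and what the corollary is designed to harvest.
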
}
\def\bd{\begin{Def}}
\def\ed{\end{Def}}
\def\br{\begin{Rem}}
\def\er{\end{Rem}}
\def\be{\begin{Exa}}
\def\ee{\end{Exa}}
\def\bpf{\begin{proof}}
\def\epf{\end{proof}}
\def\ben{\begin{enumerate}}
\def\een{\end{enumerate}}
\newcommand{\zz}{(z,\bar z)}
 
\begin{abstract}
This paper addresses two questions related to mapping problems. In the first part of the paper,  we discuss  some  recent results  regarding the $2$-jet determination for  biholomorphisms between   smooth    (weakly) pseudoconvex  hypersurfaces; in  light of those, we formulate  a  general problem  and  illustrate it with examples.  In the second part of the paper, we 
provide a new  example of a real submanifold of codimension $7$ in $\C^{10}$ that is not strictly pseudoconvex and for which its germs of CR automorphisms are determined by their $2$-jets at a given point. 

\end{abstract}

\maketitle

\section*{Introduction}
Cartan's Uniqueness Theorem asserts that any   
 holomorphic map $F$  sending a bounded domain in $\Bbb C^N$ into itself, that fixes   some $p$  of the domain  with   $F'(p)=I,$  is  the identity map. ($F'(p)$ is the holomorphic derivative of $F$ at $p$  and $I$ is the identity operator).

\noindent Instead of considering holomorphic maps between bounded domains in $\Bbb C^N,$  the question  whether an analogous statement to Cartan's Theorem holds for germs  of  holomorphic maps in
$\Bbb C^{n+1}$ between boundaries of domains in $\Bbb C^{n+1},$ or more
generally, between generic submanifolds of codimension $d$  in $\Bbb C^{n+d},$  has attracted  a lot of research. See for instance the surveys \cite{Z02}, \cite{LM}, \cite{GKMS} and the references therein. 

\noindent We are interested   in   the   following two questions:  
\begin{enumerate}\label{preci}
\item Under what conditions (of regularity)  can we  compute   the optimal number  of derivatives needed to uniquely  determine   germs  of holomorphic maps, sending a germ at $p$   of  a   hypersurface in  $\Bbb C^{n+1}$  into itself fixing $p?$
\item Under what conditions  is  a  germ  of  a  CR automorphism  of  a sufficiently smooth  Levi nondegenerate     generic  real submanifold in  $\Bbb C^{n+d}$   
    uniquely determined by its $2$-jet at $p?$ 
\end{enumerate}

\noindent A fundamental result by Chern and Moser shows  that germs   of  biholomorphisms  sending  a  real-analytic  
hypersurface of type $2$ (in the sense of Kohn and Bloom-Graham)   into itself are uniquely determined by at most   $2$-jets if its Levi form is nondegenerate. See  \cite{CM}. Notice that $2$-jets
(and not less)  are required in the case of the Lewy hypersurface. 
 We emphasize that Question (2) is also relevant for the class of biholomorphisms, since 
 the above result by Chern and Moser   becomes false without any hypothesis on the Levi map (see e.g.  \cite{Me}, \cite{GM}).

\vspace{0.5cm}

\noindent  We start with Question (1).   The second and third authors, with Zaitsev,  have generalized in \cite{KMZ} this result to  any  smooth  hypersurface of type $ m>2,$ showing that at most  $(m-1)$-jets are needed to determine uniquely  automorphisms, provided that  its  associated  model hypersurface (the homogeneous hypersurface "given" by the minimal homogeneous  non pluriharmonic  polynomial  of degree $m$  with respect to the tangential variables  in the Taylor expansion of the smooth hypersurface)  is holomorphically nondegenerate. Notice that $(m-1)$-jets
(and not less)  are required for the hypersurface given by 
$
\{(z_1, z_2, w)\in  \Bbb C^{3} \ | \ 
 \Im w = \Re z_1 \bar z_2^{m-1}\}.
 $

Kim and the third author in \cite{KK} consider  the above   question  under the natural assumption of {\it pseudoconvexity}. They study    the case of smooth  (weakly) pseudoconvex  hypersurfaces under the condition that the Lie algebras of  infinitesimal CR automorphisms  of their (holomorphically nondegenerate) models have nonvanishing top degree components. See  Definition \ref{td}.  In particular, they show that only $2$-jets are needed to determine uniquely  their automorphisms.

 One of the goals  of this paper is to revisit \cite{KK} in order to explore the general case, that is,  when the top degree component of  the Lie algebra of  infinitesimal CR automorphisms  of   models of weakly pseudoconvex hypersurfaces  possibly  vanishes.   An example that illustrates  this situation is 
\be \label{pinson0} Let $M \subset  \mathbb C^{3}$ be   given by
 $$\Im w=|z_1|^8 + |z_1|^6 {\Re {z_1}}^2 + |z_2|^8 .$$ 
 The reader can check that $M$ is  a  holomorphically nondegenerate weakly pseudoconvex model with  vanishing of the top degree component  of its  symmetry  Lie algebra. See  Section \ref{bf}  for the  precise definitions and results.
\ee   
 In light of \cite{KK}, we may address the following problem:
\begin{itemize}
\item  Is   every automorphism of  a  pseudoconvex smooth  hypersurface  with holomorphically nondegenerate model
     uniquely determined by its $2$-jet? 
\end{itemize}

\vspace{0.5cm}

  We now focus on the  Question (2).
Let $M \subset \C^{n+d}$ be a  $\mathcal{C}^{4}$ generic real submanifold of real codimension $d\ge 1$ through $p=0$ given locally by the following system of equations 
\begin{equation*}
	\begin{cases}
		\Re   w_1= \transp\bar z A_1 z+ O(3)\\
		\ \ \ \ \vdots \\
		\Re   w_d = \transp\bar z A_d z+ O(3)
	\end{cases}
\end{equation*}
where $A_1,\hdots,A_d$ are Hermitian matrices of size $n$, and in the remainder O(3), the variables $z$ and $\Im w$  are respectively of weight one and two. 
We assume that $M$ is a
strongly Levi nondegenerate real submanifold, that is, such that  $\sum_{j=1}^d b_jA_j$  is invertible for some $b\in \R^d$.
For  $a \in \C^d$ sufficiently small,  we 
consider the  following  matrix equation 
\begin{equation*}
	PX^2 + AX +   \transp{\overline {P}}=0,
\end{equation*} 
 where $P:=\sum_{j=1}^d {{a}_j} A_j $ and   $A:=\sum_{j=1}^d (b_j -a_j-\overline{a_j})A_j,$
and we let   $X$ be the unique $n\times n$ matrix solution 
of this equation with $\|X\|<1.$ The matrix  $X$ plays a crucial role  in the study of stationary discs, a technique used for  the CR automorphism  $2$-jet determination  problem (see e.g. \cite{be-bl-me}, \cite{be-me},  \cite{tu3}, \cite{be-me2}).
In particular, it leads to an important factorization result that   generalizes a classical factorization theorem due to Lempert in \cite{le} for positive definite matrix $A$ (see Lemma 2.3 in \cite{be-me2}).

In \cite{be-me3}, we introduced the notion of  {\it  $\mathfrak{D}(a)$-nondegeneracy} (see definition \ref{definondeg100}) that generalizes the notion of {\it  $\mathfrak{D}$-nondegeneracy},   a sufficient condition to obtain  the $2$-jet determination of CR automorphisms (see  \cite{be-me},\cite{be-me2}).
If $M$ is strictly pseudoconvex, that is, when the matrix $A$ defined above can be chosen  to be positive definite,   Tumanov showed in \cite{tu3}  that it is automatically   $\mathfrak{D}(a)$-nondegenerate at $0$ for some small $a \in \R^d.$  He then showed, using this condition, that   any  germ  of  a  CR automorphism  of class $C^2$   is  
uniquely determined by its $2$-jet at $p.$ We also note that if  $M$ is a Levi nondegenerate hypersurface, then $M$  is  $\mathfrak{D}$-nondegenerate at $0$;  the first author  and  Blanc-Centi proved in  this case  that   any  germ  of  a  CR automorphism of class $C^3$   is  
uniquely determined by its $2$-jet at $p$   (see \cite{be-bl}).

We formulated in \cite{be-me3} the following conjecture:
\begin{itemize}
\item  If $M$ is   $\mathfrak{D}(a)$-nondegenerate at $0$ for some small $a \in \C^d,$ then any  germ  of CR automorphism  of class $C^3$   is 
uniquely determined by its $2$-jet at $0.$
\end{itemize}
 In this paper, we present an example of a model quadric  which is $\mathfrak{D}(a)$-nondegenerate at $0$ for some small $a \ne 0,$  which is not strictly pseudoconvex nor $\mathfrak{D}$-nondegenerate at $0,$  and which  satisfies the above conjecture.
	
\vspace{0.5cm}

The paper is organized as follows. Section 1 deals with the first question. In subsections \ref{bf1} and \ref{bf2}, we  give new proofs  of the results of \cite{KK}, starting with the   important class   of  (weakly) pseudoconvex hypersurfaces whose  models have  Lie  algebras of symmetries with nonvanishing top degree components: namely, the hypersurfaces whose models are given by " sums of squares". In subsection \ref{bf3}, we discuss the general case in $\mathbb C^{3},$  and obtain a positive answer to the above question for  (weakly) pseudoconvex  hypersurfaces with  "single chain" models (Theorem \ref{capi}).
 The general case in $\mathbb C^{3} $ will be  treated in a forthcoming paper. The second question is addressed in Section 2 where we essentially present an example of a quadric in $\C^{10}$ which is not strictly pseudoconvex nor 
 $\mathfrak{D}$-nondegenerate, and is $\mathfrak{D}(a)$-nondegenerate at $0$ for some small $a$, and such that its germs of CR automorphisms are 
uniquely determined by their $2$-jets at $0$.

\section{Part I: Results on the $2$-jet determination under the assumption of pseudoconvexity.}

\subsection{Preliminaries}\label{bf}
Let  $M \subset \mathbb C^{n+1}$ be a  smooth hypersurface,the light
and $p \in M $ be a  point of finite type $m$ in the sense of Bloom-Graham \cite{BG}.
We  consider
local holomorphic coordinates $(z,w)$ vanishing at $p$,
where $z =(z_1, z_2, ..., z_n)$ and  $z_j = x_j + iy_j$,
$w=u+iv$. The hyperplane $\{ v=0 \}$ is assumed to be tangent to
$M$ at $p$, hence  $M$  is described near $p$ as the graph of a uniquely
determined real valued function
\begin{equation*} v = \psi(z_1,\dots, z_n,  \bar z_1,\dots,\bar z_n,  u), \ d\psi(0) = 0.
\label{vp1}
\end{equation*}
{
  We may assume (\cite{BG}, \cite{K})
that
\begin{equation}\label{fifi}
\psi(z_1,\dots, z_n,  \bar z_1,\dots,\bar z_n,  u)=Q(z, \bar z) +o(|u|+|z|^m),
\end{equation}
where $Q(z, \bar z)$ is a nonzero homogeneous polynomial of degree $m$ with {no pluriharmonic} terms. Note that removing the pluriharmonic terms requires a change of coordinates }{in the variable $w$, which "absorbs" the pluriharmonic terms into $w$.}

 Reflecting on the  choice
of  variables in  {\eqref{fifi}}, the
variables $w$, $u$ and $v$ are given weight $1,$  $\dfrac {\partial }{\partial {w}}$ weight $-1,$ while 
the complex tangential variables $(z_1, \dots, z_n)$  are given weight $ \dfrac{1}{m}$ and  $\dfrac {\partial }{\partial {z_{j}}}, \ j=1, \dots n, $ weight $ -\dfrac{1}{m}.$ 
For this choice of weights, 
 {$Q(z, \bar z)$} is  a  weighted homogeneous polynomial of weighted  degree one, while $o(|u|+|z|^m)$ contains weighted homogeneous polynomials  of weighted degree  strictly bigger than one.

 Recall that the   derivatives of  weighted order $\kappa$  are  those of the form
 $$\dfrac{\partial^{\vert \alpha \vert + \vert \hat \alpha \vert + l}}{\partial z^{\alpha}\partial \bar z^{\hat \alpha} \partial u^l}, \ \ \ 
 \kappa=l +\dfrac{1}{m}\sum_{j=1}^n (\alpha_j + \hat \alpha_j).$$\emph{}
We may then rewrite  equation \eqref{fifi} as 
\begin{equation}\label{vp12}
 v = \psi(z,  \bar z,  u)= Q\zz + o(1),
\end{equation}
where $o(1)$ denotes a smooth function  whose derivatives  at $0$ of weighted order less than or equal to
one vanish. This motivates the following definition 
\bd\cite{FM} Let $M$ be given by  (\ref{vp12}).
We define  its {\it associated
 model hypersurface} 
by
\begin{equation*} M_0: = \{(z,w) \in \mathbb C^{n+1}\ | \
 v  = Q \zz \}. \label{2}\end{equation*}
We say that $M$ is a smooth perturbation of $M_0.$
\ed

We denote  by $\Aut (M,p),$  the stability group of $M,$ that is, the germs at $p$
of biholomorphisms mapping $M$ into itself and fixing $p,$
and  by $\hol (M,p),$ the set of  germs of holomorphic vector fields in
$\mathbb C^{n+1}$ whose real part 
is tangent to $M$. An element of $ \hol(M,p)$ is called a  real-analytic infinitesimal CR automorphism at $0$, or  a symmetry.

We  write 
$$ \hol(M_0,0) = \bigoplus_{ \mu \ge -1 }  \5g_{\mu},$$
where $\5g_{\mu}$ consists of weighted homogeneous vector fields
of weight $\mu.$ 
 We recall the following definition

\bd{A real-analytic hypersurface $M \subset \mathbb C^{n+1}$  is  {\it holomorphically nondegenerate
at $p \in M$} if there is no germ at $p$ of a holomorphic vector field $X$ tangent to $M.$}
 \ed
It  is shown in \cite{KMZ}  that, if  $M_0$ is holomorphically nondegenerate,   the Lie algebra of  real-analytic infinitesimal CR automorphisms $\hol(M_0, 0)$
 has   the following weighted grading:
\begin{equation}\label{grin}
\hol(M_0, 0) = \5g_{-1}\oplus \5g_{-1/m}  \oplus \5g_{0}
\oplus \bigoplus_{\tau=1}^{m-2}\5g_{\tau/m} \oplus \5g_{1-1/m} \oplus \5g_{1},
\end{equation}

with the following explicit description of the
graded components:
\begin{enumerate}
\item $\5g_{-1}=\{ a\d_{w} \ | \ a\in \R\}$, 

\item
$\5g_{-1/m} = \{\sum_{j} a^{j} \d_{z_{j}} +
g(z)\d_{w} \ | \ a^{j}\in \C,\, g \in\C_{m-1}[z], \,
2i\sum(a^{j}Q_{z_{j}} + \bar a^{j} Q_{\bar z_{j}}) = g-\bar g
 \}$, 

\item $\5g_{0} =\{\sum_{j}
f^{j}(z)\d_{z_{j}} + a w\d_{w} \ | \ f^{j} \in\C_{1}[z], \,a\in \R, \, 
\sum(f^{j}Q_{z_{j}} + \bar f^{j} Q_{\bar z_{j}}) = aQ
\}$,

\item  $\5g_c:= \bigoplus_{\tau=1}^{m-2}  \5g_{\tau/m}
= \bigoplus_{\tau=1}^{m-2} \{\sum_{j} f^{j}(z)\d_{z_{j}} \ | \ f^{j}\in \C_{\tau+1}[z],\,
\sum(f^{j}Q_{z_{j}} + \bar f^{j} Q_{\bar z_{j}}) = 0
\}$,

\item $\5g_{1-1/m}
= \{\sum_{j} (f^{j}(z) + a^{j}w)  \d_{z_{j}} + g(z)w \d_{w} \ | \
a^{j}\in \C, \, f^{j}\in \C_{m}[z], \, g\in\C_{m-1}[z], \,
\sum_{j} a^{j}\d_{z_{j}} +
g(z)\d_{w}\in\5g_{-1/m},\,
2\sum(f^{j}Q_{z_{j}} + \bar f^{j} Q_{\bar z_{j}} + 2iQ(a^{j}Q_{z_{j}} - \bar a^{j} Q_{\bar z_{j}}))
=Q(g+\bar g))
 \}$.

\item $\5g_{1}=\{
\sum_{j}f^{j}(z)w\d_{z_{j}}  + a w^{2}\d_{w} \ | \
f^{j}\in\C_{1}[z], \, a\in\R, \, \sum_{j} f^{j}(z) Q_{z_{j}} =
a Q \}$.

\end{enumerate}
 We recall that  elements in $\5g_{0}$ with $a=0$ are called {\it rigid}. We now introduce the following definition 
\bd \label{td} Under the assumption that $M_0$ is holomorphically nondegenerate,  $\5g_{1}$ is called the {\it top degree component of the Lie algebra $\hol(M_0, 0).$}
 \ed
  Zaitsev  asked whether
 the above  structure results may  be improved under the
{\it pseudoconvexity} assumption.
 
\noindent We remark that  $\5g_c=\{0\}$  in the Levi nondegenerate case.
 In  $\mathbb C^2,$  we have  $\5g_c=\{0\} $ for every model (see e.g. \cite{ELZ1}, \cite{FM11}).

\noindent Notice also that   in $\mathbb C^2,$
there are  three
different cases, two exceptional ones and one generic. The two exceptional ones
are the circular model $\Im w = |z|^
{2k}$ with a $4-$dimensional symmetry  Lie algebra,
and the tubular model $\Im w = (\Re z)^k$, which has a three dimensional
symmetry  Lie algebra. All other models fall into the generic case, which admits
a two dimensional  Lie algebra of  infinitesimal symmetries. 
  
The above  description of the Lie algebra of a holomorphically nondegenerate model  hypersurface  leads, in particular,  to the following theorem
\bt \label{cris}   Let  $M$ be  a smooth perturbation of $M_0$ given by  \eqref{2}. Suppose that $M_0$ is holomorphically nondegenerate and   $\5g_c=0.$
Then  every  element of  $\Aut (M,0)$ is uniquely determined by its $2$-jet.
\et

\subsection{The case of a sum of squares polynomial  model}\label{bf1}
We start with the case of a polynomial model given by a sum of squares, that is given by \eqref{2}, with
$$Q = \sum_{l=1}^k P_l \overline {P_l},$$  where $P_l,  \ l=1, \dots, k, $ are  homogeneous holomorphic polynomials of the same degree. Note that this model is pseudoconvex. We will need  the following definition 
\bd
 A linear vector field   in the variables $z$ given in Jordan normal form  is {\it real diagonal}  (resp. {\it imaginary diagonal}) if it  is diagonal with all the entries real (resp. imaginary). Without loss of generality, we say that $X$ is {\it real diagonal} (resp. {\it imaginary diagonal})   if its  Jordan normal form is real (resp. imaginary) diagonal.
\ed

\noindent Let  $X \in \5g_{0}$   be rigid,  given in  normal  Jordan form after a possible linear change of coordinates in the variables $z.$ We have  $X= X^{Re}+X^{Im} + X^{Nil},$  where  $X^{Re}$ (resp. $X^{Im}$) is real (resp. imaginary)  diagonal, and  $X^{Nil}$ is nilpotent.  Recall that if $X \in \5g_{0},$ then $X^{Re} \in \5g_{0},$ $X^{Im}\in \5g_{0},$  and $X^{Nil}\in \5g_{0}$ (see e.g.  \cite{KMZ}, \cite{KM22}).

\noindent The following theorem  deals with symmetries of weight $0,$  elements of $\5g_{0},$ also called rotations. The result appears  in Kim-Kol\'a\v{r} \cite{KK}. Here, we propose a different proof.
\bt \label{redr}\cite{KK}
Let $M_0 \subset \mathbb C^{n+1}$ be a polynomial  model hypersurface given by  \eqref{2}, where $Q$ is given by  a sum of squares, that is, $$Q= \sum_{l=1}^k P_l \overline {P_l},$$  where $P_l,  \ l=1, \dots, k, $ are  homogeneous holomorphic polynomials of the same degree. If $M_0$ is holomorphically nondegenerate, then  there is no real diagonal symmetry   and no nilpotent symmetry in $\5g_{0}.$
\et
\bpf Let $X$  be given in its Jordan form after a possible change of coordinates in $z.$ We have  $X= X^{Re}+X^{Im} + X^{Nil}.$   Without loss of generality, we assume that $M_0$ is given by  $\Im w=\sum_{l=1}^k P_l \overline {P_l}.$ Suppose that $ X^{Re} \ne 0.$
 We have $$X^{Re}= \sum \lambda_j z_j \partial_{z_j}, \  \lambda_j \in \Bbb R.$$
We claim that $X^{Re}( P_l )=0,$ for every $l.$ Indeed, if  $P_l$ contains a term $cz^{\alpha},\ c \ne 0,$ then the model contains a term  $Dz^{\alpha} \overline {z^{\alpha}}, \ D > 0. $  
Since $X^{Re}$ is a rotation, it implies    $$ D\sum \lambda_j (2 \alpha_j)=0,$$  and hence $X^{Re}( P_l )=0.$  But it means that $$X^{Re}(\sum_{l=1}^k P_l \overline {P_l})=0,$$ which is impossible since $M_0$ is holomorphically nondegenerate.

Now we deal with the nilpotent part $X^{Nil}.$  Suppose that $ X^{Nil} \ne 0.$ Let $s>1$ be minimal such that ${(X^{Nil})}^s(P_j)=0,$ for all $j.$  We have 
$$\Re {(X^{Nil}(\sum_{j=1}^k P_j \overline {P_j}))}=0.$$ 
Applying ${(X^{Nil})}^{s-1}\overline{(X^{Nil})}^{s-2}$ to the above equation, we obtain
$$\sum_{j=1}^k {(X^{Nil})}^{s-1} ( P_j) \overline{(X^{Nil})^{s-1}  ( {P_j})}=0.   $$
Since this is a sum of square, $(X^{Nil})^{s-1}  ( {P_j})=0,$ for every j. This is  a contradiction with the fact that $s$ was chosen to be minimal.
\epf
We recall the following theorem from \cite{JM} (see also  Proposition 4.1 in  \cite{KK}).
\bt \label{great011}\cite{JM} Let $M_0$ be given by  \eqref{2} be holomorphically nondegenerate with nonvanishing top degree component  $\5g_{1}.$  Suppose  that $\5g_{0}$ contains no  real diagonal rotations. Then $\5g_c=0.$
\et
 If $M_0$ is a sum of squares polynomial model, then the vector field $X$ given by 
$$X:=\sum z_j \partial_{z_j}$$  satisfies   condition (6) on  $\5g_{1}$  in \eqref{grin}. Hence, using Theorem \ref{cris},  Theorem \ref {redr} and  Theorem \ref{great011}, we obtain the following theorem
\bt\label{brno1}\cite{KK}
Let $M_0$  given by  \eqref{2} be a sum of squares polynomial model that is holomorphically nondegenerate. Then    every  element of  $\Aut (M,0),$ where $M$ is any smooth perturbation of $M_0,$ is uniquely determined by its $2$-jet.
\et 

\subsection{The  general case of a pseudoconvex  polynomial  model}\label{bf2}
In this section, we show that pseudoconvex polynomial models do not have either real diagonal rotations. Of course, there are  examples of  pseudoconvex models which are not sums of squares models, as illustrated by:
\be \label{pinson} Let $M_0 \subset  \mathbb C^{2}$ be   given by
 $$\Im w=|z|^8 + |z|^6 {\Re z}^2.$$ 
Notice that $\5g_{1}=0$ in this case.
\ee
We start with the following lemma
\bl\label{infine0+}
Let $F_j$ and $G_j$ be two holomorphic homogeneous polynomials of different degrees, for every $j, \ j=1, \dots, k. $ Then $\sum_{j=1}^k \Re F_j \overline{G_j}$ changes sign or is identically equal to zero.
\el
\bpf
Indeed, for some given  $z_0,$ $$F_j(\lambda z_0)=\lambda^m F_j(z_0), \ \ G_j(\lambda z_0)=\lambda^n G_j(z_0), \ m\ne n.$$ Integrating on the circle $\gamma:=\{ \lambda \in \Bbb C \ | \ |\lambda| =r\},$ we obtain  $\int_\gamma {F_j(\lambda z_0) \overline{G_j(\lambda z_0)} d\lambda}=0.$
\epf
We then have
\bt\label{brno}
Let  $M_0 \subset \Bbb C^{n+1}$ be a pseudoconvex  model hypersurface given by 
 \begin{equation*}
 \Im w =Q(z, \bar z).
 \end{equation*}  If $M_0$ is holomorphically nondegenerate, then there is no  real diagonal  rotation.
\et
\bpf  Suppose that  there is a  symmetry $X$  that is  real diagonal.   Without loss of generality, we assume that $M_0$ is given in   Jordan coordinates for $X,$  so that  $$X=\sum d_j z_j \partial_{z_j},  \ d_j \in \Bbb R.$$ We decompose  $Q$  as 
\begin{equation}\label{bt0}
Q(z, \bar z)= \sum_{j=1}^k c_j z^{\alpha^j} {\overline z}^{\alpha^j} \ +\  \sum_{{\alpha^l} \ne {\alpha^k}} c_{l,k} z^{\alpha^l} {\overline z}^{\alpha^k}. \ \ 
\end{equation}   Since $X$ is a rotation, for any monomial $z^{\alpha^l} \overline{ z^{\alpha^k}}$ in the expansion of $Q,$  we have  \begin{equation}\label{rot}
X(z^{\alpha^l})  +\overline{ X(z^{\alpha^k})}=0.
\end{equation}  It implies that $$X(\sum_{j=1}^k c_j z^{\alpha^j} {\overline z}^{\alpha^j})=0. $$ If there is no second sum, this not possible since  $M_0$ is holomorphically nondegenerate. 
Otherwise, there exists    a nonzero  term  in $z^{\alpha^l} {\overline z}^{\alpha^k}$ that is not a square (in the second sum)  for which  $X(z^{\alpha^l}) \ne 0.$ Hence  ${\overline X} ({\overline z}^{\alpha^k})\ne 0,$ using \eqref{rot}. Computing the Levi form of $M_0$ in the direction of $\overline X, $ we obtain   that the Levi form of the  model $$\Im w=\sum_{{\alpha^l} \ne {\alpha^k}} c_{l,k} z^{\alpha^l} {\overline z}^{\alpha^k}$$  in the direction $\overline X $ is not identically equal to zero. Indeed, its Levi form can  be written as 
$$ \sum_{{\alpha^l} \ne {\alpha^k}} c_{l,k}X( z^{\alpha^l}){\overline X} ({\overline z}^{\alpha^k}).$$ Using Lemma \eqref{infine0+}, this contradicts the assumption that $M_0$ is pseudoconvex.
\epf
 
\begin{Rem}
The existence of nilpotent rotations in the case of general pseudoconvex model hypersurfaces is not known. Nevertheless, they do not occur in $\Bbb C^{3}$ as it is shown by the following theorem.
\end{Rem}
\bt
Let $M_0 \subset \Bbb C^{3}$ be a pseudoconvex model   given by 
 \begin{equation*}
 \Im w =Q(z, \bar z).
 \end{equation*}  If $M$ is holomorphically nondegenerate, then  the nilpotent part of the Jordan form  of a rotation vanishes.
\et
\bpf Suppose by contradiction that $M_0$ admits a nilpotent rotation.  After a possible linear change of coordinates, it is shown in 
 \cite{K21} that  we have, for $z= (z_1, z_2),$
\begin{equation*}
    Q(z, \bar z) = \sum_{j=0}^m
    \left(z_1\overline {z_2}+ z_2\overline {z_1}\right)^j Q_{j}(z_2,\overline {z_2}),\ m\ge 1,
\end{equation*} 
 where $ Q_{j}$ is a real valued  polynomial in $(z_2,\overline {z_2})$ of suitable degree.
Computing the Hermitian matrix representing the Levi map of $M_0,$ we obtain 
\begin{equation*}\SMALL  \left( P_{ z_i  \overline{z_k}}\right)=
\begin{pmatrix}
j(j-1)(\Re z_1 \overline {z_2})^{j-2 }z_2 \overline {z_2} Q &  j(j-1)(\Re z_1 \overline {z_2})^{j-2 }z_1 \overline {z_2} Q + j(\Re z_1 \overline {z_2})^{j-1 } Q + j(\Re z_1 \overline {z_2})^{j-1 } \overline {z_2}Q_{\overline {z_2}}\\
 \dots&j(j-1)(\Re z_1 \overline {z_2})^{j-2 }z_1 \overline {z_1} Q +j(\Re z_1 \overline {z_2})^{j-1 }(\overline {z_1} Q_{\overline {z_2}} +{z_1} Q_{ {z_2}}) +(\Re z_1 \overline {z_2})^{j }Q_{z_2\overline {z_2}}
\end{pmatrix},
\end{equation*}
where $$\dots = \overline{j(j-1)(\Re z_1 \overline {z_2})^{j-2 }z_1 \overline {z_2} Q + j(\Re z_1 \overline {z_2})^{j-1 } Q + j(\Re z_1 \overline {z_2})^{j-1 } \overline {z_2}Q_{\overline {z_2}}}.$$
Computing its determinant, we obtain 
\begin{equation*}
\det {\left( P_{z_i  \overline{z_k}}\right)} = -j^2(j-1) (\Re z_1 \overline { z_2})^{2j-2 }Q^2 
\end{equation*}
$$  
+(\Re z_1 \overline { z_2})^{2j-2 }\left(j(j-1)z_2\overline {z_2}Q Q_{z_2\overline {z_2}} -j^2Q^2
-j^2(z_2 Q Q_{z_2} +\overline {z_2} Q Q_{\overline{z_2}}) -j^2 z_2\overline {z_2} Q_{z_2}Q_{\overline{z_2}}\right)
=$$
$$(\Re z_1 \overline { z_2})^{2j-2 }(-j^2(j-1) Q^2  -j\sum_{\alpha+\beta =d-2j} {2\alpha\beta|c_{\alpha,\beta}|}^2{z_2}^{\alpha+\beta}{\overline{z_2}}^{\alpha+\beta}-
$$ $$ j^2\sum_{\alpha+\beta =d-2j} {(\alpha-\beta)^2|c_{\alpha,\beta}|}^2{z_2}^{\alpha+\beta}{\overline{z_2}}^{\alpha+\beta}-j^2(1+d-2j  )Q^2  -\dots),
$$
where the dots represent an expression  without squares, and hence changing sign  by Lemma \ref{infine0+}. That means that there are points $z_2$ in any neighborhood of $0$ for which this determinant is strictly negative, which means that $M_0$ is not pseudoconvex.

\noindent If now $M_0$ is given by a sum of terms, we notice that the  coefficient of the maximal degree of the determinant in $(\Re z_1 \overline { z_2})^{2j-2 }$  is strictly  negative for a well chosen $z_2$, thanks to the above computation which is valid for every $j.$ Hence  we consider this determinant as a polynomial in $z_1=\overline{z_1}=x_1.$ There is then a solution $x_1$ for which the sign of this determinant  is negative, and therefore since it this is a homogeneous polynomial, there exists a solution $(\lambda x_1, \lambda z_2 )$ as close to zero as needed such that this determinant is strictly negative, which contradicts $M_0$ pseudoconvex at $0.$
\epf
\subsection{Vanishing of $\5g_{c}$: the case of a single chain  in $\mathbb C^{3}$} \label{bf3}
As noticed  for the case of sums of squares models, Theorem \ref{great011} imposes that we have a nonvanishing top degree  component  $\5g_{1}$ to conclude that $\5g_c=0.$  Indeed, using \eqref{grin}, the  key argument of  the proof  is   given by the following nontrivial  equation
$$[Z,Y]=0,$$  valid  for any $Z\in \5g_{1}$ and $Y \in \5g_{c}.$ 

  \noindent Examples constructed from  Example \ref{pinson}  for instance show that  $\5g_{1}=0$  may occur. 
   In \cite{FM16}, the second and third authors gave 
  a characterization  of polynomial models that have nonvanishing $ \5g_c$ in $\Bbb C^3.$

	\bd \cite{FM16}
Let $Y$ be a weighted homogeneous vector field. 
A pair of finite sequences   of holomorphic  weighted homogeneous polynomials $\{U^1, \dots, U^N\}$ and  $\{V^1, \dots, V^N\}$ is called a {\it symmetric pair of $Y$-chains} if 
\begin{equation*}
Y(U^N)=0, \ \ Y(U^j) =c_j U^{j+1}, \ \ j=1, \dots, N-1,
\label{yuv}
\end{equation*}
\begin{equation*}
Y(V^N)=0, \ \ Y(V^j) =d_j V^{j+1}, \ \ j=1, \dots, N-1,
\end{equation*}
where $c_j$, $d_j$ are nonzero complex constants, which satisfy
\begin{equation*} c_j= - \bar d_{N-j}.
\end{equation*}
If the two sequences are identical we say that $\{U^1, \dots, U^N\}$ is a {\it symmetric $Y$-chain}.
\ed

\bt \cite{FM16} Let 
 $M_0$  be a holomorphically nondegenerate hypersurface  given by  \eqref{2},  which admits a nontrivial 
 $Y\in \5g_c$. 
Then $Q$ can be decomposed in the following way

\begin{equation*}
Q=\sum_{j=1}^M T_j,
\end{equation*} where each $T_j$ is given by 
\begin{equation*}
 T_j =\Re \left(\sum_{k=1}^{N_j}
  {U_j^{k}}
  {\overline {{V_j^{N_j -k +1}}}}\right), 
  \end{equation*}
where $\{{ {{U_j^{1}}, \dots, {U_j^{N_j}} }}\}$  and  $\{{ {{V_j^{1}}, \dots, {V_j^{N_j}} }}\}$ are a symmetric pair of $Y$-chains.
 \et

This shows that  the simplest polynomial model  to study  
 for which there exists a nonzero  $Y \in \5g_c$ is given by one single chain, that is, is given by  the following expression
\begin{equation*}
\Im w= \sum_{j=1}^N U^j \overline {U^{N-j+1}},
\end{equation*}
 where
$Y(U^N)=0, Y(U^j)=c_j U^{j+1}, j=1, \dots, N-1, $
 where $c_j$ and  $d_j$  are nonzero complex constants satisfying
  $c_j= - \overline {c_{N-j}}.$
	 	Observe that the  vanishing of  $\5g_1$  may arise  for a single symmetric $Y$-chain, as it is shown in the following example.
\be
Let $M_0 \subset \Bbb C^{3}$ be  given by 
\begin{equation*}
\Im w= P\bar Q + Q\bar P,
\end{equation*}
where $P$ and $Q$ are given by 
\begin{equation*}
\begin{aligned}
P(z_1, z_2) &= i z_1^2 z_2^3 (z_1 - z_2)
\\
Q(z_1, z_2) &=  3 z_1^3 z_2^5 (z_1 - z_2)
\end{aligned}
\end{equation*}
and  let $Y$ be  given by 
\begin{equation*}
 Y =z_1 z_2^2 (5z_1 - 6z_2) \d_{z_1}  - z_2^3 (4z_1 - 3 z_2)
\d_{z_2}.
\end{equation*} One can check that $Y \in \5g_c$ and 
 $\5g_1=\{0\}.$
\ee
We have the following theorem that shows that pseudoconvex models have vanishing $\5g_c$:
\bt \label{capi}
 Let  $M_0 \subset \Bbb C^{3}$ be  holomorphically nondegenerate with one single  symmetric Y -chain. Then  $M_0$ is not pseudoconvex.
\et
\bpf
Assume first that the single chain is of length two, that is $M_0$ is given by 
\begin{equation*}
\Im w= P\bar Q + Q\bar P.
\end{equation*}
Computing the Hessian matrix, we get
$$
\begin{pmatrix}
P_{z_1}\overline {Q}_{\bar {z_1}} + Q_{z_1}\bar P_{\bar {z_1}} & P_{z_1}\overline {Q}_{\bar {z_2}} + Q_{z_1}\bar P_{\bar {z_2}}\\
 P_{z_2}\overline {Q}_{\bar {z_1}} + Q_{z_2}\bar P_{\bar {z_1}}&P_{z_2}\overline {Q}_{\bar {z_2}} + Q_{z_2}\bar P_{\bar {z_2}}
\end{pmatrix}.
$$ 
Computing the determinant of this matrix, we find that it is equal to
$$
(P_{z_1}\overline {Q}_{\bar {z_1}}+ Q_{z_1}\bar P_{\bar {z_1}})(P_{z_2}\overline {Q}_{\bar {z_2}} + Q_{z_2}\bar P_{\bar {z_2}})-(P_{z_1}\overline {Q}_{\bar {z_2}} + Q_{z_1}\bar P_{\bar {z_2}})(P_{z_2}\overline {Q}_{\bar {z_1}} + Q_{z_2}\bar P_{\bar {z_1}})=
$$
$$
P_{z_1}\overline {Q}_{\bar {z_1}}P_{z_2}\overline {Q}_{\bar {z_2}}+P_{z_1}\overline {Q}_{\bar {z_1}}Q_{z_2}\bar P_{\bar {z_2}}+
 Q_{z_1}\bar P_{\bar {z_1}}P_{z_2}\overline {Q}_{\bar {z_2}} +  Q_{z_1}\bar P_{\bar {z_1}}Q_{z_2}\bar P_{\bar {z_2}}-
$$
$$-
P_{z_1}\overline {Q}_{\bar {z_2}}P_{z_2}\overline {Q}_{\bar {z_1}}-P_{z_1}\overline {Q}_{\bar {z_2}}Q_{z_2}\bar P_{\bar {z_1}}-
Q_{z_1}\bar P_{\bar {z_2}}P_{z_2}\overline {Q}_{\bar {z_1}}-Q_{z_1}\bar P_{\bar {z_2}}Q_{z_2}\bar P_{\bar {z_1}}=
$$
$$
P_{z_1}\overline {Q}_{\bar {z_1}}Q_{z_2}\bar P_{\bar {z_2}}+
 Q_{z_1}\bar P_{\bar {z_1}}P_{z_2}\overline {Q}_{\bar {z_2}} -
$$
$$-P_{z_1}\overline {Q}_{\bar {z_2}}Q_{z_2}\bar P_{\bar {z_1}}-
Q_{z_1}\bar P_{\bar {z_2}}P_{z_2}\overline {Q}_{\bar {z_1}}= 
$$
$$
-(P_{z_1} Q_{z_2} -P_{z_2} Q_{z_1})\overline{(P_{z_1} Q_{z_2} -P_{z_2} Q_{z_1})}\le 0.$$
Since $M_0$ is holomorphically nondegenerate, this determinant is not identically $0.$
Now, if the  single symmetric Y-chain is of any length, the determinant of the Hessian matrix will be composed, by expanding the determinant, of determinants of matrices of the form
$$
\begin{pmatrix}
R_{z_1}\overline {T}_{\bar {z_1}} + R_{z_1}\bar T_{\bar {z_1}} & S_{z_1}\overline {U}_{\bar {z_2}} + S_{z_1}\bar U_{\bar {z_2}}\\
 R_{z_2}\overline {T}_{\bar {z_1}} + R_{z_2}\bar T_{\bar {z_1}}&S_{z_2}\overline {U}_{\bar {z_2}} + S_{z_2}\bar U_{\bar {z_2}}
\end{pmatrix},
$$
where $R\bar T+ T\bar R$  and $S\bar U+ U\bar S$ are  parts of the chain,
and one (if possible) of the form 
$$
\begin{pmatrix}
L_{z_1}\overline {L}_{\bar {z_1}}  & L_{z_1}\overline {L}_{\bar {z_2}} \\
 L_{z_2}\overline {L}_{\bar {z_1}} &L_{z_2}\overline {L}_{\bar {z_2}} 
\end{pmatrix},
$$
if $L\bar L$ is part of the chain.
We then see that if $(R,T)$ is different from $(S,U)$ in the chain,  then there are no squares in those determinants, and hence it may change signs by Lemma \ref{infine0+}. The last determinant is identically equal to $0.$ Therefore the only determinants that matter are when $(R,T)=(S,U)$, and all
those determinants are negative, as we saw in the first part of the proof. We note that for the element containing the last element of the chain, the corresponding determinant is not identically to zero, by construction, and hence the conclusion.
\epf
\begin{Rem}
In a forthcoming paper, the second and third authors with Liczman will show that Theorem \ref{capi} holds in full generality, namely,  models with nonvanishing $\5g_c$ are not pseudoconvex in $\Bbb C^{3}.$
 We would like to emphasize that the following lemma is crucial, when working  with models in $\Bbb C^{3}:$
\bl {Let $Y=f_1 (z_1, z_2)\frac{\partial }{\partial z_1} +f_2  (z_1, z_2)\frac{\partial }{\partial z_2} $ be a   nonzero 
homogeneous holomorphic  vector field of weighted degree $> 0$.  Then the space of weighted homogeneous polynomials  
in $z=(z_1,z_2)$ of a given weighted degree $\nu$
annihilated by $X$ has complex dimension at most one.}
\el
The question whether possible nonvanishing   $\5g_c$ for pseudoconvex models (with top degree component $\5g_1$ possibly vanishing)  holds,  is still open for $n>2.$
\end{Rem}

 \section{Part II: The  $1$-jet determination of  stationary discs attached to generic CR submanifolds: new examples and their consequences}

Let $M \subset \C^{n+d}$ be a  $\mathcal{C}^{4}$ generic real submanifold of real codimension $d\ge 1$ through $p=0$ given locally by the following system of equations 
\begin{equation}\label{eqred0}
\begin{cases}
 \Re   w_1= \transp\bar z A_1 z+ O(3)\\
\ \ \ \ \vdots \\
\Re   w_d = \transp\bar z A_d z+ O(3)
\end{cases}
\end{equation}
where $A_1,\hdots,A_d$ are Hermitian matrices of size $n$. In the remainder O(3), the variables $z$ and $\Im w$  are respectively of weight one and two. We denote by $r_1,\ldots,r_d$ the above defining functions, we set $r=(r_1,\ldots,r_d)$ and we write $M=\{r=0\}$.     
We associate to $M$ its corresponding  model quadric $M_H$ given by
\begin{equation}\label{eqred1}
\begin{cases}
\Re  w_1 = \transp\bar z A_1 z\\
\ \ \ \ \vdots \\
\Re  w_d = \transp\bar z A_d z.
\end{cases}
\end{equation}

In a recent series of papers \cite{be-bl-me,tu,be-me,be-me2,be-me3} the authors have investigated geometric conditions on $M$, and in particular on its model  $M_H$, that ensure that (germs at $0$ of) CR automorphisms of $M$ fixing the origin are uniquely 
determined by their $2$-jet at the origin. It is important to point out that in each of these papers, the authors have focused on the method of stationary discs which has proven to be well adapted for such problems.

\subsection{Nondegenerate submanifolds}
The submanifold $M$ given by \eqref{eqred0} is {\it strongly Levi nondegenerate at $0$} (resp. {\it strongly pseudoconvex at $0$}) if there exists 
$b \in \Bbb R^d$ such that the matrix $\sum_{j=1}^d b_jA_j$ 
is invertible (resp. positive definite).  We say   $M$ is {\it $\mathfrak{D}$-nondegenerate} at 0 if, in addition to being strongly Levi nondegenerate at $0$, 
there  exists $V \in \C^{n}$ such that the $d \times d$ matrix  
$$\Re\left(\transp \overline D \left(\sum_{j=1}^d b_jA_j\right)^{-1}D\right)$$ 
is invertible, where $D$ is the $n \times d$ matrix whose $j^{th}$ column is
 $A_jV$. Note that in such case we have $d\leq 2n$.  

We now discuss a matrix equation associated with $M$ that plays an important role in the understanding of stationary discs attached to $M$. 
Assume that  $M$ is strongly Levi nondegenerate with $b\in \R^d$ such that $\sum_{j=1}^d b_jA_j$  is invertible. 
   For  $a \in \C^d$ sufficiently small, we define  
   $$P:=\sum_{j=1}^d {{a}_j} A_j  \ \ \ \ \ \ \     A:=\sum_{j=1}^d (b_j -a_j-\overline{a_j})A_j,$$ and we 
 consider the  following  matrix equation 
\begin{equation}\label{eqX}
PX^2 + AX +   \transp{\overline {P}}=0.
\end{equation} 
Let  $X$ be the unique $n\times n$ matrix solution 
  of \eqref{eqX} with $\|X\|<1.$  
We point out that $X$ depends on both $a$ and $b$. Moreover, $X$ is real analytic with respect to $(a, \bar a).$
The following lemma is new and important for our approach.

\begin{Lem}\label{lemdiffX}
We have, for $s=1,\ldots,d$,
 \begin{equation}\label{eqbre}
 X_{\Re a_s}= \sum_{r=0}^\infty (-1)^{r+1}\left((I+A^{-1}P X)^{-1}A^{-1}P\right)^r(I+A^{-1}P X)^{-1}A^{-1} A_s(I-X)^2X^r
  \end{equation}
  where $X_{\Re  a_s}$ denotes the partial derivative of $X$ with respect to $\Re  a_s$. 
\end{Lem}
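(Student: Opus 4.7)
The plan is to differentiate the defining equation \eqref{eqX} with respect to $\Re a_s$ and then solve the resulting linear matrix equation for $Y := X_{\Re a_s}$ by an iterative scheme.

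First, I would compute the partial derivatives of $P$, $A$ and $\transp{\overline P}$ with respect to $\Re a_s$. Using $a_j=\Re a_j+i\Im a_j$, $\overline{a_j}=\Re a_j-i\Im a_j$, together with the fact that each $A_j$ is Hermitian (so that $\transp{\overline P}=\sum_j \overline{a_j}A_j$), one obtains
$$ \frac{\partial P}{\partial \Re a_s}=A_s, \qquad \frac{\partial A}{\partial \Re a_s}=-2A_s, \qquad \frac{\partial \transp{\overline P}}{\partial \Re a_s}=A_s. $$
Differentiating $PX^2+AX+\transp{\overline P}=0$ and regrouping then yields
$$ (A+PX)\,Y + P\,Y\,X \;=\; -A_s(I-X)^2, $$
since $-A_sX^2+2A_sX-A_s=-A_s(I-X)^2$.

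Next, $A$ is invertible for $a$ close to $0$ (being a small perturbation of the invertible matrix $\sum_j b_jA_j$), and $\|A^{-1}PX\|<1$ for such $a$ (using $\|X\|<1$ and $P$ small). I would left-multiply the above identity by $A^{-1}$ and then by $(I+A^{-1}PX)^{-1}$, arriving at the Sylvester-type relation
$$ Y + B\,Y\,X \;=\; -C, $$
where $B := (I+A^{-1}PX)^{-1}A^{-1}P$ and $C := (I+A^{-1}PX)^{-1}A^{-1}A_s(I-X)^2$.

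The last step is to iterate. Starting from $Y=-C-BYX$ and substituting the right-hand side into itself repeatedly produces the partial expansion
$$ Y \;=\; \sum_{r=0}^{N}(-1)^{r+1} B^{r} C X^{r} \;+\; (-1)^{N+1}B^{N+1}\,Y\,X^{N+1}. $$
Since $\|X\|<1$ and $\|B\|$ can be made as small as desired by shrinking $a$, we have $\|B\|\cdot\|X\|<1$, so the remainder vanishes as $N\to\infty$ and the resulting infinite series converges. Expanding $B$ and $C$ gives exactly \eqref{eqbre}. The main obstacle is purely the bookkeeping required to verify the invertibility of $A$ and of $I+A^{-1}PX$ on the parameter domain and to control the operator norms uniformly so as to justify both the convergence of the series and the vanishing of the remainder; these points are routine under the standing smallness assumption on $a$.
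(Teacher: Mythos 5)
Your proposal is correct and follows essentially the same route as the paper: differentiate the matrix equation \eqref{eqX} in $\Re a_s$, regroup to obtain $(I+A^{-1}PX)X_{\Re a_s}+A^{-1}PX_{\Re a_s}X=-A^{-1}A_s(I-X)^2$, and iterate. Your explicit remainder estimate justifying convergence of the series is a welcome addition, since the paper's proof ends with only ``the result follows by iterations.''
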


\begin{proof}

 Differentiating  \eqref{eqX}   in the variable $\Re{ a_s}$ and evaluating at $(a,b- a-\overline{a})$ leads to 
$$A_s X^2 + P (XX_{\Re a_s}+X_{\Re a_s}X)-2A_sX+AX_{\Re a_s}+A_s=0.$$
Thus
$$AX_{\Re a_s}+P (XX_{\Re a_s}+X_{\Re a_s}X) = -A_s(I-X)^2$$
$$X_{\Re a_s}+A^{-1}P (XX_{\Re a_s}+X_{\Re a_s}X) = -A^{-1} A_s(I-X)^2$$
$$(I+A^{-1}P X)X_{\Re a_s}+A^{-1}PX_{\Re a_s}X = -A^{-1} A_s(I-X)^2$$
$$(I+A^{-1}P X)X_{\Re a_s}= -A^{-1} A_s(I-X)^2-A^{-1}PX_{\Re a_s}X $$
which finally implies
$$X_{\Re a_s}= -(I+A^{-1}P (X)^{-1}A^{-1} A_s(I-X)^2-(I+A^{-1}P X)^{-1}A^{-1}PX_{\Re a_s}X $$
The result follows by iterations.
\end{proof}

Following \cite{be-me2}, we say that $M$ is {\it stationary minimal at $0$ for $(a,b -a-\overline{a},V)$}, where $V\in \C^n$, if the $d$ matrices $A_1,\ldots,A_d$ restricted to the orbit space
 $$\mathcal{O}_{X, V}:={\rm span}_\R \{V, XV, {X^2}V, \ldots, {X^k}V, \ldots\}$$
 are $\R$-linearly independent. 
We recall that if $M$ is $\mathfrak{D}$-nondegenerate then it is stationary minimal for $(0,b,V)$. 

\subsection{Stationary discs and their $1$-jet determination}
Let $M=\{r=0\}\subset \C^{n+d}$ be a $\mathcal{C}^{4}$ generic  real submanifold of codimension $d$ given by  (\ref{eqred0}). Following Lempert \cite{le} and Tumanov \cite{tu},  a holomorphic disc $f: \Delta \to \C^{n+d}$ continuous up to  $\partial \Delta$ and such that $f(\partial \Delta) \subset M$ is {\it stationary for $M$} if there 
exist  $d$ real valued functions $c_1, \ldots, c_d: \partial \Delta \to \R$ with $\sum_{j=1}^dc_j(\zeta)\partial r_j(0)\neq 0$ for all $\zeta \in \partial \Delta$   and such that the map denoted by $\tilde{f}$
\begin{equation*}
\zeta \mapsto \zeta \sum_{j=1}^dc_j(\zeta)\partial r_j\left(f(\zeta), \overline{f(\zeta)}\right)
\end{equation*}
defined on $\partial \Delta$ extends holomorphically on $\Delta$. In that case, the disc $\bm{f}=(f,\tilde{f})$ is a holomorphic lift of $f$ to the cotangent bundle $T^*\C^{N}$ and the set of all such lifts $\bm{f}=(f,\tilde{f})$, with $f$ nonconstant, is denoted by $\mathcal{S}(M)$. We also consider lifts attached to a fixed point in the cotangent bundle; we fix $b_0 \in  \Bbb R^d$  such that $\sum_{j=1}^d {b_0}_jA_j$ is invertible and
 we define $\mathcal{S}_0(M_H) \subset \mathcal{S}(M_H)$ to be the subset of lifts whose value at $\zeta=1$ is $(0,0,0,b_0/2)$.

It is remarkable that the strong Levi nondegeneracy of $M$ allows to construct lifts of stationary discs by small perturbations of an initial one (see e.g. Theorem 2.1 in \cite{be-bl-me}). We note that the choice of the initial stationary disc is quite important in this approach. However strong Levi nondegeneracy alone does not guarantee that 
the constructed family of discs shares properties that can be used for the jet determination of CR automorphisms. More precisely, we care about the $1$-jet determination of discs (see e.g. Proposition 3.6 in \cite{be-bl-me}) and a certain filling property by those discs 
(see e.g. Proposition 3.3 in  \cite{be-bl-me}). We noticed in \cite{be-me} that $1$-jet determination of discs is a key property that actually implies a relevant filling property (Proposition 4.4 in \cite{be-me}, see also corollary 5.6 in \cite{tu}). 
Starting \cite{be-me2}, we then focused on the $1$-jet determination of discs and turned to a different approach in which we describe explicit lifts of stationary discs for the model quadric $M_H$ near a given one (Theorem 3.1 in \cite{be-me2}; see also Proposition 4.2 in \cite{tu} in case $M_H$ is strongly pseudoconvex). We briefly discuss the $1$-jet determination of stationary discs.  
 
Let $M_H \subset \Bbb C^{n+d}$ be a model submanifold of codimension $d$ given by \eqref{eqred1}. We consider the  $1$-jet map at $\zeta=1$ defined on $\mathcal{S}_0(M_H)$ 
by  
$$\mathfrak j_{1}:\bm{f} \mapsto (\bm{f}(1),\bm{f}'(1)).$$
Since $\bm{f}(1)=(0,0,0,b_0/2)$ with $b_0 \in \R^d$ fixed, we identify $\mathfrak j_{1}$ with the derivative map  
$\bm{f} \mapsto \bm{f}'(1)$. In the paper \cite{be-me2}, using the explicit expression of discs, we proved  that                                                                       
the $1$-jet map $\mathfrak j_{1}: \C^d \times \C^n \to  \C^n \times \R^d \times \R^d$  can essentially be written as 
\begin{equation*} 
\mathfrak j_{1}
: (a,V) \mapsto 
(V, \transp\overline{V}(I-\transp\overline {X})K_j(I-X)V, \Im  {a})
\end{equation*}
where $X$ is the small solution of \eqref{eqX} and, for $j=1, \dots,d$, 
\begin{equation}\label{eqK_j}
K_j=\sum_{r=0}^\infty {\transp \overline {X}}^r A_j X^r. 
\end{equation}
We then recall the following key lemma
\begin{Lem}\label{lemjet}\cite{be-me2}  Let $M_H$ be a strongly Levi nondegenerate quadric given by \eqref{eqred1} and let $b \in \R^d$ be such that $\sum_{j=1}^d b_jA_j$ is invertible.
 Let $a \in \C^d$ be small enough and let  X be the unique $n\times n$ matrix solution of \eqref{eqX} with  $\|X\|<1$.  
 \begin{enumerate}[i.]
\item  The $1$-jet map $\mathfrak j_{1}$ is a local diffeomorphism at $(a,V) \in \C^d\times \C^n$ if and only if 
the  $d\times d$ matrix 
$$\left(\dfrac{\partial}{\partial {\Re  a_s}}\transp\overline{V}(I-\transp\overline {X})K_j(I-X)V\right)_{j,s}$$ is invertible.
\item
For any $s=1,\ldots,d$, we have 
\begin{eqnarray*}
\dfrac{\partial}{\partial {\Re a_s}}\transp\overline{V}(I-\transp\overline {X})K_j(I-X)V& = & -2\Re  \left(\sum_{r=0}^{\infty} \transp\overline{V}\left(I-\transp \overline {X}\right)^2 {\transp \overline {X}}^rK_jX_{\Re  a_s}X^rV\right).\\
\end{eqnarray*}
\end{enumerate}
\end{Lem}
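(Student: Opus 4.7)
The plan is to treat the two parts separately. For part (i), set $h_j(a,V):=\transp\overline{V}(I-\transp\overline{X})K_j(I-X)V$ so that $\mathfrak j_{1}(a,V)=(V,(h_1,\ldots,h_d),\Im a)$. I would split the real coordinates of the source as $(V,\Re a,\Im a)\in\C^n\times\R^d\times\R^d$ and the target analogously. Since the first output component equals $V$ and the third equals $\Im a$, the real Jacobian of $\mathfrak j_{1}$ is block lower triangular: the first and third diagonal blocks are identity matrices of sizes $2n$ and $d$ respectively, while the middle $d\times d$ diagonal block is exactly $\bigl(\partial h_j/\partial\Re a_s\bigr)_{j,s}$. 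Local diffeomorphism at $(a,V)$ is then equivalent to invertibility of this middle block by the inverse function theorem.

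For part (ii), the plan is to differentiate $h_j$ with respect to $\Re a_s$ by the product rule, obtaining three contributions, and then to simplify. Since $A_j$ is Hermitian, the series \eqref{eqK_j} shows that $K_j$ is Hermitian, so $h_j$ is real and more generally $\overline{\transp\overline{V}MV}=\transp\overline{V}M^{*}V$ for any matrix $M$. Differentiating \eqref{eqK_j} termwise and reindexing the resulting double sums, one obtains
\[
\partial_{\Re a_s}K_j=\Sigma_1+\Sigma_1^{*},\qquad \Sigma_1:=\sum_{k\ge 0}\transp\overline{X}^{k}\,\transp\overline{X_{\Re a_s}}\,K_j\,X^{k+1}.
\]
Pairing Hermitian adjoints, the three contributions of the product rule combine into a single $\Re$-expression
\[
\partial_{\Re a_s}h_j=2\,\Re\Bigl(\transp\overline{V}\bigl[(I-\transp\overline{X})\Sigma_1-\transp\overline{X_{\Re a_s}}\,K_j\bigr](I-X)V\Bigr).
\]

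The key step is a telescoping identity for the bracketed expression: writing $(I-\transp\overline{X})\Sigma_1$ as two shifted sums and regrouping them with $\transp\overline{X_{\Re a_s}}K_j$, I expect the verification
\[
(I-\transp\overline{X})\Sigma_1-\transp\overline{X_{\Re a_s}}\,K_j=-\sum_{k\ge 0}\transp\overline{X}^{k}\,\transp\overline{X_{\Re a_s}}\,K_j\,X^{k}(I-X),
\]
which reduces $\partial_{\Re a_s}h_j$ to $-2\,\Re\sum_{k\ge 0}\transp\overline{V}\,\transp\overline{X}^{k}\,\transp\overline{X_{\Re a_s}}\,K_j\,X^{k}(I-X)^{2}V$. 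A last step takes the complex conjugate of each summand under the real part, using $(X^{k})^{*}=\transp\overline{X}^{k}$, $K_j^{*}=K_j$, $((I-X)^{2})^{*}=(I-\transp\overline{X})^{2}$, and $(\transp\overline{X_{\Re a_s}})^{*}=X_{\Re a_s}$. This transports the $(I-X)^{2}$ factor to the left, swaps $\transp\overline{X}^{k}$ with $X^{k}$, and replaces $\transp\overline{X_{\Re a_s}}$ by $X_{\Re a_s}$, yielding the stated formula.

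The main obstacle is the careful double-indexed bookkeeping when differentiating the series for $K_j$ and executing the telescoping identity cleanly; everything else is an application of Hermiticity. I note that Lemma \ref{lemdiffX} is not actually used in this proof: only the existence of the partial derivative $X_{\Re a_s}$ enters the final identity, while its explicit expansion provided there is what makes (i) verifiable in concrete examples.
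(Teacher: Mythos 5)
Your argument is correct, but note that this paper does not actually prove Lemma \ref{lemjet}: it is recalled verbatim from \cite{be-me2}, so there is no in-text proof to compare against, and your write-up is effectively a self-contained reconstruction of the argument from that reference. I checked the two nontrivial steps: the identity $\partial_{\Re a_s}K_j=\Sigma_1+\Sigma_1^{*}$ follows from reindexing $\sum_{m\ge1}\sum_{i=0}^{m-1}$ as $\sum_{k,r\ge0}$ with $m=k+r+1$, and the telescoping identity reduces to $\transp\overline{X}\Sigma_1+\transp\overline{X_{\Re a_s}}K_j=\sum_{k\ge0}\transp\overline{X}^{k}\transp\overline{X_{\Re a_s}}K_jX^{k}$, which is an index shift; both hold, and the final conjugation under $\Re$ gives exactly the stated formula. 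Two small caveats. First, in part (i) the Jacobian in the coordinates $(V,\Re a,\Im a)\mapsto(V,h,\Im a)$ is not block lower triangular, because $X$, hence $h$, depends on $\Im a$ through $P=\sum_j a_jA_j$; the block $\partial h/\partial\Im a$ sits above the diagonal. The conclusion survives because the first and third block rows are $(I,0,0)$ and $(0,0,I)$, so the determinant still equals $\det\bigl(\partial h_j/\partial\Re a_s\bigr)$, but you should say that rather than "lower triangular". Second, your proof presupposes the explicit parametrization $\mathfrak j_{1}(a,V)=(V,\transp\overline V(I-\transp\overline X)K_j(I-X)V,\Im a)$ and the real analyticity of $X$ in $(a,\bar a)$ (needed both for the inverse function theorem and for termwise differentiation of the series \eqref{eqK_j}); these are inputs from \cite{be-me2} and the discussion preceding Lemma \ref{lemdiffX}, and are legitimately taken as given here. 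Your closing remark that Lemma \ref{lemdiffX} is not needed for the identity itself, only for evaluating $X_{\Re a_s}$ in concrete examples, is accurate and consistent with how the paper uses the two lemmas in the proof of Theorem \ref{theoex}.
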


Using this  lemma, we proved that if a model $M_H$ is strongly pseudoconvex (with  $b \in \R^d$ such that $\sum_{j=1}^d b_jA_j$ is positive definite)
and stationary minimal at $0$ for $(a,b-a-\overline{a},V)$, with $a \in \C^d $ sufficiently small and $V \in \C^n$, then the  $1$-jet map $\mathfrak j_{1}$ is a local diffeomorphism at $(a,V)$  (see Theorem 4.3 in \cite{be-me3}). The proof of this result strongly suggests a possible extension to strongly Levi nondegenerate model quadrics. Accordingly we defined in \cite{be-me3}
\bd\label{definondeg100}
  Let $M\subset \C^{n+d}$ be a real submanifold given by \eqref{eqred0}. Assume $M$ is strongly Levi nondegenerate at $0$ and let $b\in \R^d$ be such that $\sum_{j=1}^d b_jA_j$ is invertible. Let $a \in \C^d$ be sufficiently small and set  $A:=\sum_{j=1}^d (b_j -a_j-\overline{a_j})A_j.$  
 We say that  $M$ is  {\it  $\mathfrak{D}(a)$-nondegenerate} at $0$  if there exists $V\in \C^n$ such that  the  matrix 
\begin {equation*}
\Re\left(\sum_{r=0}^{\infty} \transp \overline V{\transp \overline {X}}^rA_j A^{-1}A_sX^r V\right)_{j,s}
\end {equation*}
is nondegenerate.
\ed
We point our that when $a = 0$ we recover the definition of  $\mathfrak{D}$-nondegeneracy and 
if  $M$ is  $\mathfrak{D}(a)$-nondegenerate at $0$  then it is stationary minimal at $0$ for $(a,b -a-\overline a,V)$ for some $V\in \C^n$.  We  also emphasize that, unlike the  notion of $\mathfrak{D}$-nondegeneracy  which imposes $d\le 2n,$ the
 $\mathfrak{D}(a)$-nondegeneracy  condition  does not impose any restriction on the codimension.

We conjectured in \cite{be-me3} that if $M$ is   $\mathfrak{D}(a)$-nondegenerate at $0$ then the $1$-jet map $\mathfrak j_{1}$ a local diffeomorphism at $(a,V)$ for some 
$V \in \C^n$, and that, accordingly, (germs at $0$ of)  CR automorphisms  of $M$ are  uniquely determined by their $2$-jet at $0$.    
While we are not yet a stage where we can settle this conjecture, we provide in the next section an example of a $\mathfrak{D}(a)$-nondegenerate quadric which is 
neither strongly pseudoconvex nor $\mathfrak{D}$-nondegenerate and for which the $1$-jet map $\mathfrak j_{1}$ is a local diffeomorphism at $(a,V)$. This example (and its perturbations) is 
then the first known example where $2$-jet determination of  CR automorphisms holds outside of the realm of previously known submanifolds, namely  $\mathfrak{D}$-nondegenerate ones \cite{be-me} and strongly pseudoconvex ones \cite{tu3}.

\subsection{ New  examples}
We have 
\begin{Thm}\label{theoex}
Let $M \subset \C^{10}$ be the  real submanifold of  codimension $7$ through $0$ given   by 
\begin{equation}\label{eqFe}
\begin{cases}
\Re w_1= z_1 \overline{ z_1} +  z_2 \overline{ z_2} - z_3 \overline{ z_3} \\
\Re w_2=  z_1 \overline{ z_1} - z_2 \overline{ z_2} \\
\Re w_3= z_2 \overline{ z_3} + z_3 \overline{ z_2}  \\
\Re w_4= z_1 \overline{ z_2}  + z_2 \overline{ z_1} \\
\Re w_5=iz_1 \overline{ z_3 } - iz_3 \overline{ z_1 } \\
\Re w_6=iz_2 \overline{ z_3 } - iz_3 \overline{ z_2 } \\
\Re w_7=iz_1 \overline{ z_2 } - iz_2 \overline{ z_1 } \\
\end{cases}
\end{equation}
 Then   $M$ is not strongly pseudoconvex nor $\mathfrak{D}$-nondegenerate at 0. Moreover there exists $a\ne 0$  small enough and $V\in \C^3$  such that  
 \begin{enumerate}[i.]
\item  $M$   is $\mathfrak{D}(a)$-nondegenerate at $0$, and,
\item  the $1$-jet map at $\zeta=1$ defined on $\mathcal{S}_0(M)$ by $\mathfrak j_{1}:\bm{f} \mapsto \bm{f}'(1)$
is a diffeomorphism at (the lift parametrized by) $(a,V)$.
 \end{enumerate}
\end{Thm}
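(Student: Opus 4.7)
My plan is to unpack the seven defining equations \eqref{eqFe} into Hermitian $3\times 3$ matrices $A_1,\ldots,A_7$, and then handle the four assertions in turn. For the negative statements, observe that the diagonal of $H(b):=\sum_{j=1}^{7}b_jA_j$ is $\text{diag}(b_1+b_2,\, b_1-b_2,\, -b_1)$: positive definiteness would require all three to be positive, which is impossible, so $M$ is not strongly pseudoconvex. Strong Levi nondegeneracy does hold, for instance with $b=(1,0,\ldots,0)$ yielding $H(b)=A_1=\text{diag}(1,1,-1)$, which is invertible. However, since $n=3$ and $d=7$, the $n\times d$ matrix $D$ whose $j$-th column is $A_jV$ has image of complex dimension at most $n=3$ and therefore of real dimension at most $2n=6<7$, so the $d\times d$ matrix $\Re\bigl(\transp{\overline D}H(b)^{-1}D\bigr)$ is automatically singular and $M$ is not $\mathfrak{D}$-nondegenerate at $0$.

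For the two positive claims, I would fix $b=(b_1,0,\ldots,0)$ with $b_1>0$ large enough that $A=b_1A_1+O(a)$ remains invertible for small $a$, choose $a=t\,e_s\in\C^7$ for a well-chosen index $s$ and real parameter $t$ close to $0$, and study everything as a power series in $t$. At $t=0$ one has $P=0$, $A=b_1A_1$, and the small solution of \eqref{eqX} is $X=0$; implicit differentiation of \eqref{eqX} (equivalently, iterating the identity in Lemma \ref{lemdiffX}) gives an expansion $X=tX_1+t^2X_2+\cdots$ whose coefficients $X_k$ are explicit in $A_1$ and $A_s$. Plugging this into the matrix of Definition \ref{definondeg100} produces a Taylor series in $t$ whose constant term is precisely the rank-$\leq 6$ matrix from the $\mathfrak{D}$-case above, and therefore singular; the strategy is to choose $s$ and $V\in\C^3$ so that the higher-order corrections in $t$ supply the missing seventh direction, making the $7\times 7$ matrix invertible for all sufficiently small $t\ne 0$. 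This establishes $\mathfrak{D}(a)$-nondegeneracy.

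For the $1$-jet diffeomorphism claim I invoke Lemma \ref{lemjet}(i) together with the explicit formula \eqref{eqbre} from Lemma \ref{lemdiffX}. Combining these, the $7\times 7$ Jacobian
$\bigl(\partial_{\Re a_s}\transp{\overline V}(I-\transp{\overline X})K_j(I-X)V\bigr)_{j,s}$
has a leading order in $t$ that, modulo an overall nonzero scalar coming from $A^{-1}$, coincides with the $\mathfrak{D}(a)$-nondegeneracy matrix, since $(I-X)\to I$ and $K_j\to A_j$ as $t\to 0$. Thus once $(s,V)$ has been chosen to guarantee $\mathfrak{D}(a)$-nondegeneracy, the Jacobian is likewise invertible at $(te_s,V)$ for small $t\ne 0$, and $\mathfrak j_{1}$ is a local diffeomorphism there. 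The main obstacle is the concrete verification of the filling-in step: among the matrix products $\transp{\overline X}^r A_j A^{-1} A_s X^r$ appearing in the series of Definition \ref{definondeg100}, one must identify $s$ and $V$ so that the contributions beyond order zero span a seventh direction transverse to the image of the rank-six constant term. This is a finite but delicate computation with the specific Hermitian generators in \eqref{eqFe}, and it uses in an essential way the asymmetric mixture of diagonal and off-diagonal generators (with both real and imaginary parts) in the defining system.
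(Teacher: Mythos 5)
Your treatment of the two negative assertions is correct and coincides with the paper's: non-pseudoconvexity from the incompatible signs of the diagonal entries of $\sum b_jA_j$, and failure of $\mathfrak{D}$-nondegeneracy purely from the codimension bound $d=7>2n=6$. The difficulty is in the two positive assertions, and there your proposal is a strategy rather than a proof. First, the theorem is an existence statement, and you never exhibit the witnesses: you leave the index $s$ and the vector $V$ unspecified and explicitly defer "the filling-in step" as "a finite but delicate computation." That computation \emph{is} the proof. Moreover, your ansatz $a=t\,e_s$ (a single coordinate direction) is not what the paper uses and is not obviously adequate: the paper takes $a=(\varepsilon,\varepsilon,0,\dots,0)$, precisely so that $P=\varepsilon(A_1+A_2)$ is diagonal and commutes with $A=A_1$, forcing $X=\mathrm{diag}(\alpha,0,\gamma)$ with three \emph{distinct} eigenvalues; it is this distinctness (via Vandermonde) that makes $\{V,XV,X^2V\}$ span $\C^3$ for $V=(1,1,1)$ and renders the $7\times7$ matrix block-diagonal ($B_1\oplus B_2$) with computable entries in $\alpha,\gamma$. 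With $a=t e_1$, for instance, $X$ has a repeated eigenvalue and the orbit space degenerates. Since the constant term in $t$ of your matrix is singular of corank $\geq 1$, invertibility for small $t\neq 0$ hinges on the first nonvanishing coefficient of the determinant actually being nonzero, which cannot be asserted without the explicit expansion the paper carries out.

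Second, and more seriously, your deduction of ii.\ from i.\ — "once $(s,V)$ has been chosen to guarantee $\mathfrak{D}(a)$-nondegeneracy, the Jacobian is likewise invertible" — is exactly the implication that the paper states as an \emph{open conjecture} ("We conjectured in \cite{be-me3} that if $M$ is $\mathfrak{D}(a)$-nondegenerate at $0$ then the $1$-jet map $\mathfrak j_{1}$ [is] a local diffeomorphism\dots While we are not yet [at] a stage where we can settle this conjecture\dots"). The two matrices agree only at order $t^0$, where both are singular; their higher-order corrections are genuinely different series (one built from $\transp\overline{X}^rA_jA^{-1}A_sX^r$, the other from $(I-\transp\overline{X})^2\transp\overline{X}^rK_jX_{\Re a_s}X^r$ with $K_j=\sum_r \transp\overline{X}^rA_jX^r$), so invertibility of one does not formally transfer to the other. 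The paper resolves this by computing the Jacobian explicitly via Lemmas \ref{lemjet} and \ref{lemdiffX}, obtaining a second block-diagonal matrix $C_1\oplus C_2$ whose invertibility is checked by the same leading-order analysis (or directly at $\varepsilon=1/5$). You need to perform that second computation independently rather than infer it from i.
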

As a direct corollary (see e.g. Theorem 3.7 in \cite{be-me}), we have
\begin{Cor}
Let  $M$ be the submanifold given in Theorem  \ref{theoex}. Then the germs  at $0$ of  CR automorphisms  of $M$ of class $\mathcal{C}^3$  are uniquely determined by their $2$-jet at $0$. Furthermore, the result holds for  
$\mathcal{C}^4$ perturbations of $M$ given by \eqref{eqred0}.  
\end{Cor}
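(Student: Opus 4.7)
The plan is to deduce the corollary directly from the general $1$-jet to $2$-jet propagation mechanism for stationary discs, of which Theorem 3.7 of \cite{be-me} is the precise statement and of which Theorem \ref{theoex}(ii) verifies exactly the hypothesis. No additional computation on the specific quadric \eqref{eqFe} is needed; the work is conceptual and proceeds by passing from the family of stationary lifts produced by Theorem \ref{theoex} to a filling of a neighborhood of $0$.

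Given two germs $F_1,F_2$ of CR automorphisms of $M$ of class $\mathcal{C}^3$ fixing $0$ and sharing the same $2$-jet there, I would first lift each $F_i$ to a symplectic map $\bm F_i$ on (an open piece of) the conormal bundle of $M$. Since $M$ is strongly Levi nondegenerate by Theorem \ref{theoex}, such a lift maps stationary lifts to stationary lifts; the common $1$-jet of $F_1,F_2$ at $0$ ensures that $\bm F_1,\bm F_2$ act with the same linearization on the fiber of the conormal bundle at $0$, so after postcomposing with the action of the known $2$-jet one may assume both fix the base point $(0,0,0,b_0/2)$ of $\mathcal{S}_0(M)$. Then for every $\bm f \in \mathcal{S}_0(M)$ the two stationary lifts $\bm F_1\circ \bm f$ and $\bm F_2\circ \bm f$ have equal value and equal $1$-jet at $\zeta=1$. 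Applying Theorem \ref{theoex}(ii) at the distinguished parameter $(a,V)$, the $1$-jet map $\mathfrak j_1$ is a local diffeomorphism there, so the equality of $1$-jets forces $\bm F_1\circ\bm f = \bm F_2 \circ \bm f$ for every $\bm f$ in an open neighborhood of the distinguished lift. This yields a real $(n+d)$-dimensional family of stationary discs on whose boundaries $F_1$ and $F_2$ coincide, and the filling property (Proposition 3.3 of \cite{be-bl-me}; see also Proposition 4.4 of \cite{be-me}) shows that these boundaries sweep out a full neighborhood of $0$ in $M$. Analytic continuation then gives $F_1 = F_2$ near $0$.

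For the $\mathcal{C}^4$ perturbation statement, the construction of stationary lifts is obtained from a Riemann--Hilbert problem whose data depend continuously on the defining functions of the submanifold. Invoking the implicit function theorem as in Theorem 2.1 of \cite{be-bl-me}, any $\mathcal{C}^4$ perturbation $M'$ of $M$ carries a family of stationary lifts that is $\mathcal{C}^1$-close to the one produced by Theorem \ref{theoex}, and the Jacobian of the $1$-jet map at the corresponding parameter is a small perturbation of that computed in Theorem \ref{theoex}(ii); invertibility being an open condition, $\mathfrak j_1$ remains a local diffeomorphism for $M'$, and the argument above applies verbatim. The main delicacy — not really an obstacle, since it is already handled in the cited references — is the boundary regularity needed to compose a $\mathcal{C}^3$ CR automorphism with a stationary disc of class $\mathcal{C}^{2,\alpha}$ up to $\partial\Delta$ while preserving the stationarity structure; this regularity budget is precisely what accounts for the gap between the $\mathcal{C}^3$ assumption on the automorphism and the $\mathcal{C}^4$ assumption on the perturbed submanifold.
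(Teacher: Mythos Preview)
Your proposal is correct and follows the same route as the paper: the corollary is obtained as a direct application of Theorem~3.7 in \cite{be-me} once Theorem~\ref{theoex}(ii) establishes that the $1$-jet map $\mathfrak j_1$ is a local diffeomorphism. The paper in fact gives no argument beyond that citation, whereas you unpack the mechanism (lift to the conormal bundle, invariance of stationary discs, $1$-jet determination of lifts, filling); this is correct and helpful exposition, but not a different approach.
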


\begin{proof}[Proof of Theorem \ref{theoex}]
We first note that due to its codimension, the submanifold $M$ is not $\mathfrak{D}$-nondegenerate at 0. Now, the matrices corresponding to \eqref{eqFe} are
$$A_1=\left(\begin{array}{ccc}1&0&0\\0&1&0\\0&0&-1\end{array}\right)\qquad 
A_2=\left(\begin{array}{ccc}1&0&0\\0&-1&0\\0&0&0\end{array}\right)\qquad 
A_3=\left(\begin{array}{ccc}0&0&0\\0&0&1\\0&1&0\end{array}\right)\qquad 
A_4=\left(\begin{array}{ccc}0&1&0\\1&0&0\\0&0&0\end{array}\right)\qquad$$ 
$$
A_5=\left(\begin{array}{ccc}0&0&i\\0&0&0\\-i&0&0\end{array}\right)\qquad 
A_6=\left(\begin{array}{ccc}0&0&0\\0&0&i\\0&-i&0\end{array}\right)\qquad 
A_7=\left(\begin{array}{ccc}0&i&0\\-i&0&0\\0&0&0\end{array}\right).
$$
Note that 
$${\rm span}_\R \{A_1,\ldots, A_7 \} =\left\{\left(\begin{array}{cccc}\lambda_1 + \lambda_2&\lambda_4 + i\lambda_7& i\lambda_5 \\\lambda_4 - i\lambda_7&\lambda_1 -\lambda_2&\lambda_3 + i\lambda_6\\-i\lambda_5&\lambda_3 -i\lambda_6&-\lambda_1\end{array}\right) \ | \ \lambda_j \in \R, j=1,\ldots,7 \right\},$$
from which it follows that the matrices $A_j$'s are linearly independent. Moreover $M$ is strongly Levi nondegenerate at $0$ but is not strongly pseudoconvex since the positivity of the diagonal terms $\lambda_1 + \lambda_2>0,$  $\lambda_1 -\lambda_2>0$ and $-\lambda_1>0$ implies $\lambda_1 >0$ and $\lambda_1 <0.$

\vspace{0.5cm }
We now prove i. We will show that $M$ is $\mathfrak{D}(a)$-nondegenerate at $0$ for $a=(\varepsilon, \varepsilon,0, \ldots, 0)$ for some small 
enough $\varepsilon>0$. Define for $\varepsilon>0$
 $$A:=A_1 \ \mbox{ and } \ P:= \varepsilon (A_1 + A_2)= \varepsilon \left(\begin{array}{cccc}2&0&0\\0&0&0\\0&0&-1\end{array}\right).$$
The $3\times 3$ matrix  $X$ solution of \eqref{eqX} with $\|X\|<1$ is given by
$$ X= \left(\begin{array}{cccc}\alpha&0&0\\0&0&0\\0&0&\gamma\end{array}\right),$$
where $\alpha$ and $\gamma$ are solutions of quadratic equations
$$2\varepsilon \alpha^2 + \alpha + 2\varepsilon=0 \ \  \mbox{ and } \ \ \varepsilon \gamma^2 +\gamma  +\varepsilon=0.$$
We will now show that for  $$V=\left(\begin{array}{c}1\\1\\1\end{array}\right)$$
 the  matrix 
$$ \Re\left(\sum_{r=0}^{\infty} \transp \overline V{\transp \overline {X}}^rA_j A^{-1}A_sX^r V\right)_{j,s}$$
is nondegenerate. Using the Vandermonde determinant, since $\alpha \ne  \gamma,$ we have for  $\varepsilon$ small enough
$${\rm span}_\R \{V, XV, {X^2}V, \ldots, {X^r}V, \ldots\}=\C^3.$$ 
For $r>0,$ we computing  $A_j(X^rV)$ and obtain 
$$A_1 X^rV= \left(\begin{array}{c}\alpha^r\\0\\-\gamma^r\end{array}\right) \ \ A_2 X^rV= \left(\begin{array}{c}\alpha^r\\0\\0\end{array}\right) \ \ A_3 X^rV= \left(\begin{array}{c}0\\\gamma^r\\0\end{array}\right) \ \ A_4 X^rV= \left(\begin{array}{c}0\\\alpha^r\\0\end{array}\right)$$
$$A_5 X^rV= \left(\begin{array}{c}i\gamma^r\\0\\-i\alpha^r\end{array}\right) \ \ A_6 X^rV= \left(\begin{array}{c}0\\i\gamma^r\\0\end{array}\right) \ 
\ A_7 X^rV= \left(\begin{array}{c}0\\-i\alpha^r\\0\end{array}\right).$$
Therefore, after a direct computation, we obtain
\begin {equation*}
\Re\left(\sum_{r=0}^{\infty} \transp \overline V{\transp \overline {X}}^rA_j A^{-1}A_sX^r V\right)_{j,s}= \left(\begin{array}{cc}B_1&(0)\\(0)&B_2\end{array}\right)
\end {equation*}
with 
\begin{equation}\label{eqB_1}
B_1= \left(\begin{array}{ccccccc}1+\dfrac{\alpha^2}{1- \alpha^2}-\dfrac{\gamma^2}{1- \gamma^2} &\dfrac{\alpha^2}{1- \alpha^2}&2&2\\
\dfrac{\alpha^2}{1- \alpha^2}&2+\dfrac{\alpha^2}{1- \alpha^2}&-1&0\\
2&-1&\dfrac{\gamma^2}{1- \gamma^2}&1+\dfrac{\alpha \gamma}{1- \alpha \gamma}\\
2&0&1+\dfrac{\alpha \gamma}{1- \alpha \gamma}&2+ \dfrac{\alpha^2}{1- \alpha^2}\\
\end{array}\right)
\end {equation}
 and
\begin{equation}\label{eqB_2}
B_2=\left(\begin{array}{ccc}
\frac{1}{1-\gamma^2} -\frac{1}{1-\alpha^2}&-1&1\\
-1&-1+\frac{1}{1-\gamma^2} &-\frac{1}{1-\alpha\gamma}\\
1&-\frac{1}{1-\alpha\gamma}&1+\frac{1}{1-\alpha^2}\end{array}\right).
\end {equation}
The determinant of the matrix $B_1$ is a rational function with respect to $\alpha$ and $\gamma$. After putting the terms on each column on a common denominator, this determinant equals the one of the matrix  
$$ \left(\begin{array}{cccc}1-2 \gamma^2+ \alpha^2\gamma^2 &\alpha^2&2 -2 \alpha\gamma -2\gamma^2 +2 \alpha\gamma^3&2 -2 \alpha\gamma -2\alpha^2 +2 \alpha^3\gamma\\\alpha^2-\alpha^2\gamma^2&2-\alpha^2&-1+ \alpha\gamma +\gamma^2 -\alpha\gamma^3 &0\\2 -2\gamma^2 -2\alpha^2 +2\alpha^2\gamma^2&-1+ \alpha^2 &\gamma^2-\alpha\gamma^3&1-\alpha^2\\ 2 -2\gamma^2 -2\alpha^2 +2\alpha^2\gamma^2&0&1-\gamma^2 &2-\alpha^2 -2 \alpha\gamma + \alpha^3\gamma \end{array}\right),$$  with maximal degree term  equal to  $7\alpha^8\gamma^6.$
Picking the nonzero minimal order homogeneous polynomial in the development   of this polynomial, we obtain a minimal order term with respect to $\varepsilon.$ Since $\varepsilon\neq0$ may be chosen as small as desired,   we obtain that this determinant is nonzero, due to the form of the solutions $\alpha$ and $\gamma.$ The same argument applied to the matrix $B_2$ shows that its determinant is nonzero for (the same value of) $\varepsilon\neq0$ small enough.   
This proves that $M$ is   $\mathfrak{D}(a)$-nondegenerate at $0,$ for $a=(\varepsilon, \varepsilon,0, \dots, 0)$ with $\varepsilon  \ne 0$ small enough.

\vspace{0.5cm} 

We now move to the second statement $ii.$ and show that the  $1$-jet map $\mathfrak j_{1}: \C^3 \times \C^7 \to  \C^7 \times \R^3 \times \R^3$ written as 
\begin{equation*} 
\mathfrak j_{1}
: (a,V) \mapsto 
(V, \transp\overline{V}(I-\transp\overline {X})K_j(I-X)V, \Im {a}).
\end{equation*}
is a diffeomorphism at $(a,V)=((\varepsilon, \varepsilon,0, \ldots, 0),(1,1,1))$. According to Lemma \ref{lemjet}, we then need to show that the $7 \times 7$ matrix 
$$\Re\left(\sum_{r=0}^{\infty} \transp\overline{V}\left(I-\transp \overline {X}\right)^2 {\transp \overline {X}}^rK_jX_{\Re a_s}X^rV\right)$$
is invertible at $(a,V)=((\varepsilon, \varepsilon,0, \ldots, 0),(1,1,1))$. First, note that for $r>0$ 
$$X^rV=\left(\begin{array}{c}\alpha^r\\\delta_{0r}\\\gamma^r\end{array}\right) \ \mbox{ and } \ \transp\overline{V}\left(I-\transp \overline {X}\right)^2 {\transp \overline {X}}^r=(\alpha^r(1-\alpha)^2,\delta_{0r},\gamma^r(1-\gamma)^2),$$
Here we have used the Kronecker delta. 
Moreover 
$$K_1 =\left(\begin{array}{ccc}\frac{1}{1-\alpha^2} &0&0\\0&1&0\\0&0&\frac{-1}{1-\gamma^2} \end{array}\right) \ K_2 =\left(\begin{array}{ccc}\frac{1}{1-\alpha^2} &0&0\\0&-1&0\\0&0&0 \end{array}\right) \ K_3=\left(\begin{array}{ccc}0&0&0\\0&0&1\\0&1&0\end{array}\right) \ K_4 = \left(\begin{array}{ccc}0&1&0\\1&0&0\\0&0&0\end{array}\right) $$
$$K_5 =\left(\begin{array}{ccc}0 &0&\frac{i}{1-\alpha\gamma}\\0&0&0\\\frac{-i}{1-\alpha\gamma}&0&0 \end{array}\right) \ K_6 =\left(\begin{array}{ccc}0&0&0\\0&0&i\\0&-i&0\end{array}\right) \ K_7 =\left(\begin{array}{ccc}0&i&0\\-i&0&0\\0&0&0\end{array}\right).$$
Then for $r\geq 0$ we have 
$$ (\transp\overline{V}\left(I-\transp \overline {X}\right)^2 {\transp \overline {X}}^rK_j)_j=\left(\begin{array}{ccc}
\alpha^r\frac{1-\alpha}{1+\alpha} & \delta_{0r} &-\gamma^r\frac{1-\gamma}{1+\gamma}\\
\alpha^r\frac{1-\alpha}{1+\alpha} & -\delta_{0r}& 0\\
0&\gamma^r(1-\gamma)^2&\delta_{0r}\\
\delta_{0r}&\alpha^r(1-\alpha)^2&0\\
-i\gamma^r\frac{(1-\gamma)^2}{1-\alpha\gamma}&0&i\alpha^r\frac{(1-\alpha)^2}{1-\alpha\gamma}\\
0&-i\gamma^r(1-\gamma)^2&i\delta_{0r}\\
-i\delta_{0r}&i\alpha^r(1-\alpha)^2&0\\
\end{array}\right).$$
In order to compute $X_{\Re a_s}$, we apply Lemma \ref{lemdiffX}. Note that 
$$(I+A^{-1}P X)^{-1}A^{-1}P=  \left(\begin{array}{cccc}\frac{2\varepsilon}{1+2\varepsilon\alpha}&0&0\\0&0&0\\0&0&\frac{\varepsilon}{1+\varepsilon\gamma}\end{array}\right)$$ 
and
$$-(I+A^{-1}P (X)^{-1}A^{-1} A_1(I-X)^2=\left(\begin{array}{cccc}-\frac{(1-\alpha)^2}{1+2\varepsilon\alpha}&0&0\\0&-1&0\\0&0&
-\frac{(1-\gamma)^2}{1+\varepsilon\gamma}\end{array}\right)$$ 
$$-(I+A^{-1}P (X)^{-1}A^{-1} A_2(I-X)^2= \left(\begin{array}{cccc}-\frac{(1-\alpha)^2}{1+2\varepsilon\alpha}&0&0\\0&1&0\\0&0&0\end{array}\right) $$
$$ -(I+A^{-1}P (X)^{-1}A^{-1} A_3(I-X)^2=\left(\begin{array}{cccc}0&0&0\\0&0&-(1-\gamma)^2\\0&\frac{1}{1+\varepsilon\gamma}&0\end{array}\right)$$
$$ -(I+A^{-1}P (X)^{-1}A^{-1} A_4(I-X)^2=\left(\begin{array}{cccc}0&\frac{-1}{1+2\varepsilon\alpha}&0\\-(1-\alpha)^2&0&0\\0&0&0\end{array}\right)$$
$$ -(I+A^{-1}P (X)^{-1}A^{-1} A_5(I-X)^2= \left(\begin{array}{cccc}0&0&\frac{-i(1-\gamma)^2}{1+2\varepsilon\alpha}\\0&0&0\\\frac{-i(1-\alpha)^2}{1+\varepsilon\gamma}&0&0\end{array}\right)$$
$$ -(I+A^{-1}P (X)^{-1}A^{-1} A_6(I-X)^2=\left(\begin{array}{cccc}0&0&0\\0&0&-i(1-\gamma)^2\\0&\frac{-i}{1+\varepsilon\gamma}&0\end{array}\right)$$
$$-(I+A^{-1}P (X)^{-1}A^{-1} A_7(I-X)^2=\left(\begin{array}{cccc}0&\frac{-i}{1+2\varepsilon\alpha}&0\\i(1-\alpha)^2&0&0\\0&0&0\end{array}\right).$$
We then find after a straightforward computation
$$ X_{\Re a_1} =   \left(\begin{array}{cccc}-\frac{(1-\alpha)^2}{1+4\varepsilon\alpha}&0&0\\0&-1&0\\0&0&
-\frac{(1-\gamma)^2}{1+2\varepsilon\gamma}\end{array}\right) \ X_{\Re a_2} =    \left(\begin{array}{cccc}-\frac{(1-\alpha)^2}{1+4\varepsilon\alpha}&0&0\\0&1&0\\0&0&0\end{array}\right) \ \  
X_{\Re a_3} =   \left(\begin{array}{cccc}0&0&0\\0&0&-(1-\gamma)^2\\0&\frac{1}{1+\varepsilon\gamma}&0\end{array}\right)\\
$$
$$ X_{\Re a_4}= \left(\begin{array}{cccc}0&\frac{-1}{1+2\varepsilon\alpha}&0\\-(1-\alpha)^2&0&0\\0&0&0\end{array}\right) \ \
 X_{\Re a_5}= \small \left(\begin{array}{cccc}0&0&-i\frac{(1-\gamma)^2}{1+2\varepsilon(\alpha+\gamma)}\\0&0&0
\\-i\frac{(1-\alpha)^2}{1+\varepsilon(\alpha+\gamma)}&0&0\end{array}\right)$$ 
$$X_{\Re a_6}= \left(\begin{array}{cccc}0&0&0\\0&0&-i(1-\gamma)^2\\0&\frac{-i}{1+\varepsilon\gamma}&0\end{array}\right) \ \ 
X_{\Re a_7}= \left(\begin{array}{cccc}0&\frac{-i}{1+2\varepsilon\alpha}&0\\i(1-\alpha)^2&0&0\\0&0&0\end{array}\right)$$
which leads to 
$$\SMALL (X_{\Re a_s}X^rV)_s=\left(\begin{array}{ccccccc}
-\frac{(1-\alpha)^2}{1+4\varepsilon\alpha}\alpha^r & -\frac{(1-\alpha)^2}{1+4\varepsilon\alpha}\alpha^r &0 & \frac{-\delta_{0r}}{1+2\varepsilon\alpha}& -i\frac{(1-\gamma)^2}{1+2\varepsilon(\alpha+\gamma)} \gamma^r& 0& \frac{-i\delta_{0r}}{1+2\varepsilon\alpha}\\
-\delta_{0r}&\delta_{0r} &-(1-\gamma)^2\gamma^r    & -(1-\alpha)^2\alpha^r&0& -i(1-\gamma)^2\gamma^r& i(1-\alpha)^2\alpha^r\\
-\frac{(1-\gamma)^2}{1+2\varepsilon\gamma}\gamma^r&0 &\frac{\delta_{0r}}{1+\varepsilon\gamma}     &0 & -i\frac{(1-\alpha)^2}{1+2\varepsilon(\alpha+\gamma)}\alpha^r& \frac{-i\delta_{0r}}{1+\varepsilon\gamma}& 0\\
\end{array}\right).$$  
Thus the matrix  $\Re \left(\sum_{r=0}^\infty \transp\overline{V}\left(I-\transp \overline {X}\right)^2 {\transp \overline {X}}^rK_j X_{\Re a_s}X^rV\right)_{j,s}$ is equal to
$$\Re \left(\sum_{r=0}^\infty \transp\overline{V}\left(I-\transp \overline {X}\right)^2 {\transp \overline {X}}^rK_j X_{\Re a_s}X^rV\right)_{j,s}= \left(\begin{array}{cc}C_1&(0)\\(0)&C_2\end{array}\right)$$
with 
$$ C_1= \left(\begin{array}{ccccccc}
-1-\frac{(1-\alpha)^2}{(1+4\varepsilon\alpha)(1+\alpha)^2} +\frac{(1-\gamma)^2}{(1+\gamma)^2(1+2\varepsilon\gamma)}& 1-\frac{(1-\alpha)^2}{(1+4\varepsilon\alpha)(1+\alpha)^2} & \
-\frac{2(1-\gamma)}{1+\gamma}  & \frac{-2(1-\alpha)}{1+\alpha}\\
1-\frac{(1-\alpha)^2}{(1+4\varepsilon\alpha)(1+\alpha)^2}&-1-\frac{(1-\alpha)^2}{(1+4\varepsilon\alpha)(1+\alpha)^2}&(1-\gamma)^2  &
 \frac{4\varepsilon\alpha(1-\alpha)}{(1+\alpha)(1+2\varepsilon\alpha)}\\
-2(1-\gamma)^2\frac{1+\varepsilon\gamma}{1+2\varepsilon\gamma}&(1-\gamma)^2&\frac{1}{1+\varepsilon\gamma}-\frac{(1-\gamma)^3}{1+\gamma}     &-\frac{(1-\alpha)^2(1-\gamma)^2}{1-\alpha\gamma} \\ 
-2(1-\alpha)^2\frac{1+2\varepsilon\alpha}{1+4\varepsilon\alpha}&4(1-\alpha)^2\frac{\varepsilon\alpha}{1+4\varepsilon\alpha}&-\frac{(1-\alpha)^2(1-\gamma)^2}{1-\alpha\gamma}     &-\frac{1}{1+2\varepsilon\alpha}-\frac{(1-\alpha)^3}{1+\alpha}\\
\end{array}\right)$$
and 
$$ C_2= \left(\begin{array}{ccccccc}
 \frac{1}{(1-\alpha\gamma)(1+2\varepsilon(\alpha+\gamma))}\left(\frac{(1-\alpha)^3}{1+\alpha}-\frac{(1-\gamma)^3}{1+\gamma}\right)&\frac{(1-\alpha)^2}{(1+\varepsilon\gamma)(1-\alpha\gamma)} &- \frac{(1-\gamma)^2}{(1-\alpha\gamma)(1+2\varepsilon\alpha)}\\
 \frac{(1-\alpha)^2}{1+2\varepsilon(\alpha+\gamma)}&\frac{1}{1+\varepsilon\gamma}-\frac{(1-\gamma)^3}{1+\gamma} & \frac{(1-\alpha)^2(1-\gamma)^2}{1-\alpha\gamma}\\
 -\frac{(1-\gamma)^2}{1+2\varepsilon(\alpha+\gamma)}&\frac{(1-\alpha)^2(1-\gamma)^2}{1-\alpha\gamma}& -\frac{1}{1+2\varepsilon\alpha}-\frac{(1-\alpha)^3}{1+\alpha}\\
\end{array}\right).$$  
Note that the invertibility of $C_2$ follows from the one of  
 $$ \left(\begin{array}{cccc}
\left(\frac{(1-\alpha)^3}{1+\alpha}-\frac{(1-\gamma)^3}{1+\gamma}\right)&\frac{(1-\alpha)^2}{1+\varepsilon\gamma} &- \frac{(1-\gamma)^2}{1+2\varepsilon\alpha}\\
(1-\alpha)^2&\frac{1}{1+\varepsilon\gamma}-\frac{(1-\gamma)^3}{1+\gamma} & \frac{(1-\alpha)^2(1-\gamma)^2}{1-\alpha\gamma}\\
-(1-\gamma)^2&\frac{(1-\alpha)^2(1-\gamma)^2}{1-\alpha\gamma}& -\frac{1}{1+2\varepsilon\alpha}-\frac{(1-\alpha)^3}{1+\alpha}\\
\end{array}\right).$$  
While slightly  lengthier, the invertibility of $C_1$ and $C_2$ follows from the same argument that led to the invertibility of the matrices $B_1$ \eqref{eqB_1} and $B_2$ \eqref{eqB_2} for the same small enough $\varepsilon\neq 0$. Alternatively, it can be checked directly that these four matrices are invertible for e.g. $\varepsilon=1/5$.

\end{proof}

\noindent {\bf Acknowledgements} The first  author was  supported by the Center for Advanced Mathematical Sciences and by an URB grant from the American University of Beirut. The second author was supported by the Czech Science Foundation (GACR) grant GC22-15012J.


\begin{thebibliography}{BER96b}
\itemsep=2pt

\bibitem{be-bl} F. Bertrand, L. Blanc-Centi, {\it Stationary holomorphic discs and finite jet determination problems},  
Math. Ann. {\bf 358} (2014), 477-509.

\bibitem{be-bl-me} F. Bertrand, L. Blanc-Centi, F. Meylan, {\it Stationary discs and finite jet determination for non-degenerate generic real submanifolds},
Adv. Math. {\bf 343} (2019), 910-934. 

\bibitem{be-me} F. Bertrand, F. Meylan, {\it Nondefective stationary discs and $2$-jet  determination in higher codimension},  J. Geom. Anal. {\bf 31} (2021), 6292-6306.

\bibitem{be-me2} F. Bertrand, F. Meylan, {\it Explicit construction of stationary discs and its
	consequences for nondegenerate quadrics}, Ann. Sc. Norm. Super. Pisa Cl. Sci. (5) {\bf 24} (2023), 1875–1892.

\bibitem{be-me3} F. Bertrand, F. Meylan, {\it The $1$-jet determination of stationary discs attached to generic CR submanifolds}, Complex Anal. Synerg. {\bf 8} (2022), no. 3, Paper No. 14, 7 pp.

\bibitem{CM} S.S. Chern, J. Moser, \textit{Real hypersurfaces in
complex manifolds},  Acta Math.\  \textbf{133} (1974),  219--271.

\bibitem{KK} S.Y. Kim, M. Kol\'a\v r,  \textit{Infinitesimal symmetries of weakly pseudoconvex manifolds}, Math. Z.  \textbf{300} (2022),  2451--2466.

\bibitem{BG}
T. Bloom,  I. Graham, {\it On "type"  conditions for
generic real submanifolds of $\C^{n}$},  Invent. Math.  \textbf{40}
(1977),  217--243.

\bibitem{ELZ1} P. Ebenfelt, B. Lamel, D. Zaitsev, {\it  Finite jet determination of local analytic CR automorphisms and their parametrization by $2$-jets in the finite type case,} Geometric and Functional Analysis GAFA, {\bf 13} (2003) 546--573. 

\bibitem{GM} J. Gregorovič, F. Meylan, {\it Construction of counterexamples to the $2$-jet determination Chern-Moser theorem in higher codimension}
Math. Res. Lett. {\bf 29} (2022), 399–419.

\bibitem{GKMS} J. Gregorovič, M. Kolář, F. Meylan, D. Sykes, {\it Models of CR manifolds and their symmetry algebras}
Adv. Appl. Clifford Algebr. {\bf 34} (2024), no. 3, Paper No. 36, 23 pp.

\bibitem{K} J.J. Kohn,  \textit{Boundary behaviour of
$\bar \partial$ on weakly pseudoconvex manifolds of dimension
two},
 J.\ Differential  Geom.\  \textbf{6} (1972),  523--542.

\bibitem{K21} M. Kol\'a\v r,  \textit{Symmetry algebras of polynomial models in
complex dimension three}, Pure Appl. Math. Q. {\bf 18} (2022), 639–656.  

\bibitem{FM} Kol\'a\v r, M., Meylan, F., {\it Chern-Moser operators and weighted jet determination problems}, Contemp. Math. {\bf 550}, Geometric analysis of several complex variables and related topics, (2011), 75--88.

\bibitem{FM11}
M. Kol\'a\v r, F. Meylan, \textit{ Infinitesimal CR automorphisms of hypersurfaces of finite type in  $\C^2$},  Arch. Math. (Brno) {\bf 47} (2011), 367–375.

\bibitem{KMZ}
M. Kol\'a\v r, F. Meylan, D. Zaitsev, {\it Chern-Moser operators and polynomial models in CR geometry},
Adv. Math. {\bf 263}  (2014), 321–356.

\bibitem{FM16}
M. Kol\'a\v r, F. Meylan, \textit{Higher order symmetries of real hypersurfaces in $\C^3$},  Proc. Amer. Math. Soc. {\bf 144} (2016), 4807-4818.

\bibitem{KM22}
M. Kol\'a\v r, F. Meylan, {\it  Remarks on the symmetries of a model hypersurface},
Anal. Math. {\bf 48}  (2022), 545–565.

\bibitem{JM}
C. Julien, F. Meylan, {\it Characterization of Real-Analytic 
Infinitesimal CR Automorphisms 
for a Class of Hypersurfaces in  $\Bbb C^4$}, Latin American Mathematics Series – UFSCar 
subseries, Springer (2024), 203--223.

\bibitem{LM} B. Lamel, N. Mir, {\it Two decades of finite jet determination of CR mappings, }
Complex Anal. Synerg. {\bf 8} (2022), no. 4, Paper No. 19, 15 pp.

\bibitem{le} L. Lempert, {\it La m\'etrique de Kobayashi et la repr\'esentation des domaines sur la boule}, Bull. Soc. Math. France 
{\bf 109} (1981), 427-474.

\bibitem{Me} F. Meylan, {\it A counterexample to the $2$-jet determination Chern-Moser Theorem in higher codimension}.  arXiv:2003.11783 (2020).

\bibitem{tu} A. Tumanov, {\it Extremal discs and the regularity of CR mappings in higher codimension}, Amer. J. Math. 
{\bf 123} (2001), 445-473.

\bibitem{tu3} A. Tumanov, {\it Stationary Discs and finite jet determination for CR mappings in higher codimension},   Adv. Math. {\bf 371} (2020), 107254, 11 pp.

\bibitem{Z02} D. Zaitsev, \textit{Unique determination of local CR-maps by their jets: a survey,}
Atti Accad. Naz. Lincei Cl. Sci. Fis. Mat. Natur. Rend. Lincei (9) Mat. Appl.   {\bf 13} (2002), no. 3-4, 295–305.

\end{thebibliography}
\end{document}